\numberwithin{equation}{section}
\newtheorem{thm}[equation]{Theorem}
\newtheorem{prop}[equation]{Proposition}
\newtheorem{lemma}[equation]{Lemma}
\newtheorem{cor}[equation]{Corollary}
\theoremstyle{definition}
\newtheorem{rem}[equation]{Remark}
\newtheorem{example}[equation]{Example}
\newtheorem{dfn}[equation]{Definition}
\newcommand{\Steen}{S}
\newcommand{\sq}{\operatorname{sq}}
\newcommand{\st}{\operatorname{st}}
\newcommand{\onto}{\rightarrow\!\!\rightarrow}
\newcommand{\codim}{\operatorname{codim}}
\newcommand{\rdim}{\operatorname{rdim}}
\newcommand{\SB}{X}
\newcommand{\rad}{\mathop{\mathrm{rad}}}
\renewcommand{\Im}{\mathop{\mathrm{Im}}}
\newcommand{\ind}{\mathop{\mathrm{ind}}}
\newcommand{\rk}{\mathop{\mathrm{rk}}}
\newcommand{\CH}{\mathop{\mathrm{CH}}\nolimits}
\newcommand{\Ch}{\mathop{\mathrm{Ch}}\nolimits}
\newcommand{\res}{\mathop{\mathrm{res}}\nolimits}
\newcommand{\cores}{\mathop{\mathrm{cor}}\nolimits}
\newcommand{\tr}{\mathop{\mathrm{tr}}\nolimits}
\newcommand{\pr}{\operatorname{\mathit{pr}}}
\newcommand{\inc}{\operatorname{\mathit{in}}}
\newcommand{\Char}{\mathop{\mathrm{char}}\nolimits}
\newcommand{\Z}{\mathbb{Z}}
\newcommand{\F}{\mathbb{F}}
\newcommand{\HH}{\mathbb{H}}
\newcommand{\cF}{\mathcal F}
\newcommand{\cO}{\mathcal O}
\newcommand{\cT}{\mathcal T}
\newcommand{\cE}{\mathcal E}
\newcommand{\cA}{\mathcal A}
\newcommand{\cB}{\mathcal B}
\newcommand{\Spec}{\operatorname{Spec}}
\newcommand{\End}{\operatorname{End}}
\newcommand{\Aut}{\operatorname{Aut}}
\newcommand{\PS}{\mathbb{P}}
\newcommand{\Sum}{\operatornamewithlimits{\textstyle\sum}}
\newcommand{\compose}{\circ}
\newcommand{\Ker}{\operatorname{Ker}}
\newcommand{\CM}{\operatorname{CM}}
\renewcommand{\phi}{\varphi}
\newcommand{\mf}{\mathfrak}
\title
{Isotropy of unitary involutions}
\keywords
{Algebraic groups, involutions,
projective homogeneous varieties,
Chow groups and motives, Steenrod operations.
{\em Mathematical Subject Classification (2010):}
14L17; 14C25}
\author
[N. Karpenko]
{Nikita Karpenko}
\author
[M. Zhykhovich]
{Maksim Zhykhovich}
\address
{Universit\'e Pierre et Marie Curie\\
Institut de Math\'ematiques de Jussieu \\
Paris\\
FRANCE}
\email
{karpenko {\it at} math.jussieu.fr,
{\it web page}: www.math.jussieu.fr/\~{ }karpenko}
\email
{zhykhovich {\it at} math.jussieu.fr,
{\it web page}: www.math.jussieu.fr/\~{ }zhykhovich}
\date
{5 May 2011. Revised: 2 April 2012. Final revision: 4 June 2012}
\begin{document}

\begin{abstract}
We prove the so-called {\em Unitary Isotropy Theorem}, a result on
isotropy of a unitary involution.
The analogous previously known results on isotropy of
orthogonal and symplectic involutions as well as on hyperbolicity of
orthogonal, symplectic, and unitary involutions are formal consequences of
this theorem.
A component of the proof is a detailed study of the
quasi-split unitary grassmannians.
\end{abstract}

\maketitle

\tableofcontents

\section
{Introduction}

Let
$K$ be an arbitrary field of characteristic different from $2$,
$A$ a central simple $K$-algebra,
$\tau$ an {\em involution} on $A$, i.e., a self-inverse ring
anti-automorphism of $A$.
Let $F\subset K$ be the subfield of $\tau$-invariant elements of $K$.
In this paper we prove a
general {\em Isotropy Theorem} for algebras with involution saying
that if $\tau$ becomes isotropic over any
field extension of $F$ splitting
$A$, then $\tau$ becomes isotropic over
some finite odd degree field extension of $F$.
More precisely, in the case of symplectic $\tau$, ``splitting'' has to be
replaced by ``almost splitting''.
In the general case,
note that by the example of \cite{MR1850658}, $\tau$ over
$F$ does not need to be isotropic even if it becomes isotropic over an odd degree field extension.

We refer to \cite{MR1632779} for generalities on central simple algebras
with involutions.
The involution $\tau$ is {\em isotropic}, if the algebra $A$ contains a
non-zero right ideal $I$ satisfying $\tau(I)\cdot I=0$.
The algebra $A$ is {\em split}, if
it is isomorphic to a full matrix algebra over $K$;
it is {\em almost split}, if it is split or isomorphic to a full matrix algebra over a quaternion
division $K$-algebra.

Roughly speaking, Isotropy Theorem provides a possibility
to split the algebra without harming too much the involution.
It is important because
it allows one to reduce questions on involutions on central simple
algebras to the case of the split algebra, where the notion of involution
is equivalent to a simpler notion of bilinear form.
An example of application of such reduction is given in \cite[Theorem
3.8]{canondim}.

Concerning the history,
we do not know who first raised the conjecture on Isotropy Theorem, but
the first named author learned it from A. Wadsworth during a conference in
the first half of 90s.
Numerous particular cases or relaxations of this conjecture has been studied and proved
since than.
One of them is {\em Hyperbolicity Theorem}:
if $\tau$ becomes {\em hyperbolic} over any
field extension of $F$ splitting $A$, then $\tau$ is hyperbolic over
$F$.
Hyperbolicity Theorem has been proved in \cite{hypernew-tignol} for the exponent $2$ case and
in \cite{unitary} for the unitary case.

For algebras $A$ of exponent $2$,
Isotropy Theorem has been proved in \cite{oddisotro-tignol}.
More precisely, it has been reduced to the case of orthogonal $\tau$ by
J.-P. Tignol and proved in the orthogonal case by the first named author.
In the remaining case, proved in Theorem \ref{main} of the present paper,
the involution $\tau$ is of unitary type so that
the field extension $K/F$ is of degree $2$.

Before starting discussion of the proof of Theorem \ref{main},
we would like to mention that the orthogonal and symplectic cases of Isotropy Theorem
are formal consequences of its unitary case -- Theorem \ref{main}.
This relationship is explained in \cite[\S5]{unitary} and
\cite{symple}.

The proof in the unitary case, made in Section \ref{Unitary isotropy theorem},
goes along the lines of the proof of the
orthogonal case, but there are at least two important differences.
First of all, the information on orthogonal grassmannians needed in the
orthogonal case was already available: partially from topology,
partially from more recent works of A. Vishik \cite{Vishik-u-invariant},
\cite{MR2148072} partially remaking in algebraic terms the available topological material.
In contrast with this, the needed information on unitary grassmannians was
not available.
Sections \ref{Chow ring of split unitary grassmannians}
and \ref{Steenrod operations for split unitary grassmannians} cover this
need.

To explain the second difference, we have to sketch the proof.
It is easily reduced to the case of $A$ of index $2^r$ with $r\geq1$.
Let $Y$ be the $F$-variety of isotropic right ideals in $A$ of reduced dimension
$2^r$.
Let $X$ be the $F$-variety of all right ideals in $D$ of reduced dimension
$2^{r-1}$, where $D$ is a central {\em division} $K$-algebra
Brauer-equivalent to $A$.
Considering Chow motives with coefficients in $\F_2:=\Z/2\Z$ of smooth projective
$F$-varieties,
we manage to show, that certain indecomposable direct summand of the motive of
$X$, namely, the so-called {\em upper} motive of $X$ introduced in \cite{upper}, is isomorphic to a direct
summand of the motive of $Y$.
The corresponding projector $\pi$ is a cycle class in the modulo $2$ Chow group $\Ch_{\dim Y}(Y\times
Y)$.
With some more effort, we come to the case where $\pi$ is {\em symmetric},
i.e., invariant under the factor exchange automorphism of the Chow group.
We finish by applying to $\pi$ a certain operation which transforms it
to a $0$-cycle class in $\Ch_0(Y\times Y)$ of degree $1$ modulo $2$ and therefore terminates the proof.

The shortest way to explain where the operation comes from is as follows.
By \cite{MR2377113}, the projector $\pi$ can be lifted to the algebraic
cobordism.
Then, applying an appropriate
symmetric operation of \cite{MR2332601} and
projecting back from cobordism to the Chow group, we get the required $0$-cycle class.

Fortunately, the symmetric operations and algebraic cobordism theory are
not really needed here and thus we are not restricted to the
characteristic $0$.
Actually, we succeed to compute the above symmetric operation {\em because}
on symmetric projectors it
can be described in terms of the Steenrod operations on the modulo $2$ Chow groups.
This is done in Section \ref{Operations sq and st}.
The need of Steenrod operations explains our characteristic assumption.

The needed operation is related with the difference of two other operations:
$\sq$, given by the
squaring, and $\st$, given by a Steenrod operation.
The proof succeeds if the value of one operation turns out to be trivial
and the value of the other one -- non-trivial, see Lemma \ref{symmop}.
The value $\sq(\pi)$ is computed due to its relation with the {\em rank} of
the motive, see Lemmas \ref{1.5} and \ref{new1.2};
the needed ranks are calculated in Section \ref{Some ranks of some
motives}.
To compute the value $\st(\pi)$ we use the information on the Steenrod
operations on quasi-split unitary grassmannians obtained in Section
\ref{Steenrod operations for split unitary grassmannians}.

The second difference between the orthogonal and the unitary cases is as follows:
$\st(\pi)$ is the trivial value in the orthogonal case while $\sq(\pi)$ is the trivial value in the unitary case.
In particular, we have to check the non-triviality of the more sophisticated $\st(\pi)$
here, which is certainly more difficult than to show its triviality.

\bigskip
\noindent
{\sc Acknowledgements.}
The authors thank Alexander Merkurjev for permission to include Lemma
\ref{AM} and Burt Totaro for information about the state of study of
unitary grassmannians in topology.

\section
{Operations $\sq$ and $\st$}
\label{Operations sq and st}

Let $F$ be a field of characteristic $\ne2$.
Let $X$ be a connected smooth projective variety over $F$.

We write $\CH$ for the integral Chow group and we write $\Ch$ for
the Chow group with coefficients in $\F_2$.

We will use the following observation due to A. Merkurjev:

\begin{lemma}
\label{AM}
Let $\delta\colon X\to X\times X$ be the diagonal morphism.
For any $\mf{a},\mf{b}\in\CH(X\times X)$ one has
$\deg(\mf{b}^t\cdot\mf{a})=\deg(\delta^*(\mf{b}\compose\mf{a}))$,
where $\cdot$ stands for the intersection product, $\compose$
stands for the correspondence product, and $^t$ stands for the transposition of correspondences.
\end{lemma}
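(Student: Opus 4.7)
The plan is to massage the correspondence product $\mathfrak{b}\compose\mathfrak{a}=(p_{13})_*\bigl(p_{12}^*\mathfrak{a}\cdot p_{23}^*\mathfrak{b}\bigr)$ into the intersection product $\mathfrak{b}^t\cdot\mathfrak{a}$ by comparing the effect of the diagonal pullback with projection away from the middle factor. The key tool is the flat base change formula applied to a suitable cartesian square.

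First I would introduce the morphism $f\colon X\times X\to X\times X\times X$ defined by $f(x_1,x_2)=(x_1,x_2,x_1)$, together with the first projection $\pi_1\colon X\times X\to X$. The square
\[
\begin{array}{ccc}
X\times X & \xrightarrow{\;f\;} & X\times X\times X \\
\downarrow\pi_1 & & \downarrow p_{13} \\
X & \xrightarrow{\;\delta\;} & X\times X
\end{array}
\]
is cartesian (the fibre product of $p_{13}$ with $\delta$ consists of triples $(x_1,x_2,x_3)$ with $x_1=x_3$). Since $p_{13}$ is smooth and $\delta$ is a regular closed embedding, base change yields the identity $\delta^*\circ (p_{13})_*=(\pi_1)_*\circ f^*$ on Chow groups.

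Next I would compute $f^*p_{12}^*$ and $f^*p_{23}^*$ by reading off the compositions: $p_{12}\circ f=\id_{X\times X}$ and $p_{23}\circ f=\sigma$, the factor-exchange automorphism of $X\times X$. Hence $f^*p_{12}^*\mathfrak{a}=\mathfrak{a}$ and $f^*p_{23}^*\mathfrak{b}=\sigma^*\mathfrak{b}=\mathfrak{b}^t$. Substituting into the base-change identity gives
\[
\delta^*(\mathfrak{b}\compose\mathfrak{a})=(\pi_1)_*\bigl(\mathfrak{a}\cdot\mathfrak{b}^t\bigr).
\]

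Finally I would take degrees. Since the composite of $\pi_1$ with the structure map $X\to\Spec F$ coincides with the structure map $X\times X\to\Spec F$, pushforward along $\pi_1$ preserves degree. Therefore
\[
\deg\bigl(\delta^*(\mathfrak{b}\compose\mathfrak{a})\bigr)=\deg\bigl(\mathfrak{a}\cdot\mathfrak{b}^t\bigr)=\deg\bigl(\mathfrak{b}^t\cdot\mathfrak{a}\bigr),
\]
the last equality holding by commutativity of the intersection product. The only subtle point is the verification of the cartesian/transversality hypotheses needed to apply base change; once those are in place the calculation is purely functorial, so I do not anticipate a genuine obstacle.
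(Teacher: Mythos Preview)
Your argument is correct and is essentially the same as the paper's: both hinge on the cartesian square with $f(x_1,x_2)=(x_1,x_2,x_1)$ (the paper calls this map $e$) and the base-change identity $\delta^*\circ(p_{13})_*=(\pi_1)_*\circ f^*$, followed by the observation that $p_{12}\circ f=\id$ and $p_{23}\circ f=\sigma$. The only cosmetic difference is that the paper packages $p_{12}^*\mathfrak{a}\cdot p_{23}^*\mathfrak{b}$ as $s^*(\mathfrak{a}\times\mathfrak{b})$ for $s(x,y,z)=(x,y,y,z)$, whereas you work with the product directly; the content is identical.
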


\begin{proof}
By the following commutative diagram of pull-backs and push-forwards
$$
\begin{CD}
\CH(X\times X) @<e^*<< \CH(X\times X\times X) @<s^*<< \CH(X\times X\times X\times X)\\
@V\pr_{1*}VV @V\pr_{13*}VV \\
\CH(X) @<\delta^*<< \CH(X\times X).
\end{CD}
$$
where $\pr_1(x,y)=x$, $\pr_{13}(x,y,z)=(x,z)$,
$e(x,y)=(x,y,x)$, and $s(x,y,z)=(x,y,y,z)$,
we obtain
$$
\pr_{1*}(\mf{b}^t\cdot\mf{a})=\pr_{1*}e^*s^*(\mf{a}\times\mf{b})=
\delta^*\pr_{13*}s^*(\mf{a}\times\mf{b})=\delta^*(\mf{b}\compose\mf{a}),
$$
hence
$\deg(\mf{b}^t\cdot\mf{a})=\deg\pr_{1*}(\mf{b}^t\cdot\mf{a})=\deg\delta^*(\mf{b}\compose\mf{a})$.
\end{proof}

\begin{dfn}
Our first basic operation is
a map $\sq\colon \Ch(X\times X)\to\Z/4\Z$ defined as follows.
For any $\alpha\in\Ch(X\times X)$ we take its integral representative $\mf{a}\in\CH(X\times X)$
and set
$$
\sq(\alpha):=\deg(\mf{a}\cdot\mf{a})\mod{4}.
$$
Since any other integral representative of the same $\alpha$ is of the form
$\mf{a}+2\mf{b}$ with some $\mf{b}\in\CH(X\times X)$ and
$\deg((\mf{a}+2\mf{b})^2)\equiv\deg(\mf{a}^2)\pmod{4}$, the map $\sq$ is well-defined.

We also define an auxiliary
operation $\sq'\colon\Ch(X\times X)\to\Z/4\Z$ as follows.
For any $\alpha\in\Ch(X\times X)$ we take its integral representative $\mf{a}\in\CH(X\times X)$
and set $\sq'(\alpha):=\deg(\mf{a}^t\cdot\mf{a})\mod{4}$.
Since any other integral representative of the same $\alpha$ is of the form
$\mf{a}+2\mf{b}$ with some $\mf{b}\in\CH(X\times X)$ and
$\deg((\mf{a}+2\mf{b})^t\cdot(\mf{a}+2\mf{b}))\equiv\deg(\mf{a}^2)\pmod{4}$,
because
$$
\deg(\mf{b}^t\cdot\mf{a})=\deg\big((\mf{b}^t\cdot\mf{a})^t\big)=\deg(\mf{a}^t\cdot\mf{b}),
$$
the map $\sq'$ is well-defined as well.
\end{dfn}

\begin{lemma}
\label{new1.2}
Let $\sq$ and $\sq'$ be the introduced operations.
Then
\begin{enumerate}
\item
$\sq'(\alpha)=\sq(\alpha)$ for any symmetric projector $\alpha$;
\item
$\sq'(\alpha+\beta)=\sq'(\alpha)+\sq'(\beta)$
for any orthogonal correspondences $\alpha$ and $\beta$;
\item
$\sq'(\alpha)_E=\sq'(\alpha_E)$ and $\sq(\alpha)_E=\sq(\alpha_E)$
for any field extension $E/F$ and any $\alpha$.
\end{enumerate}
\end{lemma}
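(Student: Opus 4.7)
The lemma consists of three claims, which I address in turn.

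Part (3) follows from the compatibility of integral Chow groups, intersection products, transposition, and degrees of zero-cycles with base change. Given an integral lift $\mf{a}$ of $\alpha$, the pullback $\mf{a}_E$ lifts $\alpha_E$, satisfies $(\mf{a}_E)^t=(\mf{a}^t)_E$ and $(\mf{a}_E)^t\cdot\mf{a}_E=(\mf{a}^t\cdot\mf{a})_E$; since the degree of a zero-cycle is preserved under base change, $\sq'(\alpha_E)=\sq'(\alpha)$ in $\Z/4\Z$, and the same argument works for $\sq$.

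For part (2), let $\mf{a},\mf{b}$ be integral lifts of $\alpha,\beta$. Expanding $(\mf{a}+\mf{b})^t\cdot(\mf{a}+\mf{b})$ and using $\deg(\mf{b}^t\cdot\mf{a})=\deg(\mf{a}^t\cdot\mf{b})$ (transposition preserves the degree of a zero-cycle) yields $\sq'(\alpha+\beta)\equiv\sq'(\alpha)+\sq'(\beta)+2\deg(\mf{a}^t\cdot\mf{b})\pmod{4}$. By Lemma~\ref{AM}, $\deg(\mf{a}^t\cdot\mf{b})=\deg(\delta^*(\mf{a}\compose\mf{b}))$, and the orthogonality $\alpha\compose\beta=0$ in $\Ch(X\times X)$ gives $\mf{a}\compose\mf{b}\in 2\,\CH(X\times X)$, so $\deg(\delta^*(\mf{a}\compose\mf{b}))$ is even, placing the cross contribution in $4\,\Z$ and establishing additivity.

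Part (1) is the main substance. Applying Lemma~\ref{AM} twice gives $\sq(\alpha)\equiv\deg(\delta^*(\mf{a}^t\compose\mf{a}))\pmod{4}$ and $\sq'(\alpha)\equiv\deg(\delta^*(\mf{a}\compose\mf{a}))\pmod{4}$. Symmetry yields $\mf{a}^t=\mf{a}+2\mf{c}$ for some integral $\mf{c}$, whence $\sq(\alpha)-\sq'(\alpha)\equiv 2\deg(\delta^*(\mf{c}\compose\mf{a}))\pmod{4}$. Writing $\mf{a}=\mf{s}+\mf{n}$ with $\mf{n}=-\mf{c}=(\mf{a}-\mf{a}^t)/2$ and $\mf{s}=\mf{a}-\mf{n}$, the transposition-invariance of the intersection pairing combined with the near-antisymmetry $\mf{n}^t\equiv-\mf{n}$ modulo 2-torsion (whose contributions to degrees vanish since $\Z$ is torsion-free) forces $\deg(\mf{n}\cdot\mf{s})=0$, and hence $\sq(\alpha)-\sq'(\alpha)\equiv 2\deg(\mf{n}\cdot\mf{n})\pmod{4}$. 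The proof concludes by showing $\deg(\mf{n}\cdot\mf{n})$ is even: decomposing the projector identity $\mf{a}\compose\mf{a}=\mf{a}+2\mf{d}$ into its symmetric and antisymmetric parts under transposition yields $\mf{n}\compose\mf{n}=\mf{s}+2\mf{d}_+-\mf{s}\compose\mf{s}$ (with $\mf{d}_+$ the symmetric part of $\mf{d}$), and combining this with Lemma~\ref{AM} relates $\deg(\mf{n}\cdot\mf{n})$ to traces of symmetric compositions. The main obstacle is this final parity step --- essentially the Chow-theoretic analog of the elementary matrix fact that an antisymmetric integer matrix $C$ has $\tr(C^2)$ even --- which requires a careful decomposition of antisymmetric classes in $\CH(X\times X)$ modulo 2-torsion.
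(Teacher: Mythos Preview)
Your arguments for parts (2) and (3) are correct and essentially coincide with the paper's.

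For part (1), however, your proof is incomplete: you reduce to showing that $\deg(\mf{n}\cdot\mf{n})$ is even for the ``nearly antisymmetric'' part $\mf{n}$, but you do not actually carry this out --- you only gesture at a decomposition and invoke the matrix analogy that $\tr(C^2)$ is even for an antisymmetric integer matrix $C$. That analogy relies on having coordinates (so that the off-diagonal terms pair up), and it is not clear how to mimic this in $\CH(X\times X)$ without further input. Your sketch via the projector identity $\mf{a}\compose\mf{a}=\mf{a}+2\mf{d}$ does not obviously yield the parity of $\deg(\mf{n}\cdot\mf{n})$ either, since passing between the intersection product and the correspondence product introduces another application of Lemma~\ref{AM} and another transposition, and the resulting identities do not visibly close up.

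The paper avoids this entirely with a one-line observation you missed: a symmetric projector $\alpha$ admits a \emph{symmetric integral representative}. Indeed, if $\mf{a}$ is any integral lift of $\alpha$, then $\mf{a}^t\compose\mf{a}$ is an integral lift of $\alpha^t\compose\alpha=\alpha$ (using both that $\alpha$ is symmetric and that it is idempotent), and $(\mf{a}^t\compose\mf{a})^t=\mf{a}^t\compose\mf{a}$ on the nose. Computing $\sq$ and $\sq'$ with this symmetric lift, the two definitions coincide tautologically. This uses the projector hypothesis in an essential way and bypasses the parity computation altogether.
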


\begin{proof}
(1)
Indeed, such $\alpha$ has a symmetric integral representative:
if $\mf{a}$ is any integral representative, then $\mf{a}^t\compose\mf{a}$
is a symmetric integral representative of $\alpha$.
Computing $\sq(\alpha)$ and $\sq'(\alpha)$ with the help of a symmetric integral representative
of $\alpha$, we get the same.

(2)
Let $\mf{a},\mf{b}\in\CH(X\times X)$ be integral; representatives of $\alpha,\beta$.
It suffices to show that $\deg(\mf{b}^t\cdot\mf{a})\equiv 0\pmod{2}$.
By Lemma \ref{AM}, $\deg(\mf{b}^t\cdot\mf{a})=\deg(\delta^*(\mf{b}\compose\mf{a}))$.
Since the correspondences $\beta$ and $\alpha$ are orthogonal,
$\mf{b}\compose\mf{a}\in2\CH(X\times X)$.

(3)
Trivial.
\end{proof}

We are working with the Chow motives over $F$ with coefficients in $\F_2$, \cite[Chapter XII]{EKM}.
A motive is {\em split}, if it is isomorphic to a (finite) direct sum of Tate
motives.
A motive is {\em geometrically split}, if it splits over a field extension
of $F$.
The {\em rank} $\rk M$ of a geometrically split motive $M$ is the number
of Tate summands in the decomposition of $M_E$ for a field extension $E/F$
such that $M_E$ is split (this number does not depend on the choice of
$E$).
If $\alpha$ is a projector on a smooth projective variety $X$ such that
the motive $(X,\alpha)$ is geometrically split, we set
$\rk\alpha:=\rk(X,\alpha)$.

\begin{lemma}
\label{1.5}
Let $\alpha\in\Ch(X\times X)$ be a projector
and assume that the motive
$(X,\alpha)$ is geometrically split (so that the rank $\rk(\alpha)\in\Z$ of $\alpha$ is defined).
Then $\sq'(\alpha)=\rk(\alpha)\mod 4$.
\end{lemma}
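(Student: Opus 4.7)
The plan is to reduce, via base change and an orthogonal decomposition, to the case where $(X,\alpha)$ is a single Tate motive, and then do a short explicit cycle calculation.

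First, by Lemma \ref{new1.2}(3) and the base-change invariance of the rank (built into its definition), we may replace $F$ by any field extension over which $(X,\alpha)$ splits; so we assume $(X,\alpha)\cong\bigoplus_{i=1}^n\F_2(d_i)$ with $n=\rk(\alpha)$. Accordingly, $\alpha=\alpha_1+\cdots+\alpha_n$ decomposes as a sum of pairwise orthogonal rank-one projectors onto the Tate summands. Iterating Lemma \ref{new1.2}(2) -- a sum of pairwise orthogonal projectors remains orthogonal to each remaining summand -- gives $\sq'(\alpha)=\sum_{i=1}^n\sq'(\alpha_i)$ in $\Z/4\Z$, so it suffices to verify $\sq'(\alpha_i)\equiv1\pmod 4$ for each $i$.

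For each rank-one Tate projector $\alpha_i$, the standard description of a Tate summand provides cycles $g_i\in\Ch^{d_i}(X)$ and $f_i\in\Ch_{d_i}(X)$ with $\alpha_i=g_i\times f_i$ and $\deg(g_i\cdot f_i)=1\in\F_2$. Choose integral lifts $\mf{g}_i,\mf{f}_i$ (automatically $\deg(\mf{g}_i\cdot\mf{f}_i)$ is an odd integer); then $\mf{g}_i\times\mf{f}_i$ is an integral representative of $\alpha_i$, and the external product formula yields
$$
\deg\big((\mf{g}_i\times\mf{f}_i)^t\cdot(\mf{g}_i\times\mf{f}_i)\big)=\deg(\mf{f}_i\cdot\mf{g}_i)\cdot\deg(\mf{g}_i\cdot\mf{f}_i)=\deg(\mf{g}_i\cdot\mf{f}_i)^2.
$$
Since the square of an odd integer is $\equiv1\pmod 4$, this gives $\sq'(\alpha_i)\equiv1$, and summing yields $\sq'(\alpha)\equiv n=\rk(\alpha)\pmod 4$.

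The only step requiring care is justifying the orthogonal decomposition $\alpha=\sum\alpha_i$ in the split case and confirming that the orthogonality of the $\alpha_i$ (as cycle classes in $\Ch(X\times X)$) follows from the motivic direct-sum decomposition -- both standard. Beyond that, the proof is essentially a one-line identity on odd integers modulo $4$ combined with the additivity and base-change properties already supplied by Lemma \ref{new1.2}.
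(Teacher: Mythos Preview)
Your proof is correct and follows essentially the same route as the paper: both reduce via naturality (Lemma~\ref{new1.2}(3)) to the split case, then via additivity (Lemma~\ref{new1.2}(2)) to the rank-one case, and finish by noting that a rank-one Tate projector has an integral representative $a\times b$ with $\deg(a\cdot b)$ odd, whence $\sq'=\deg(a\cdot b)^2\equiv1\pmod4$. Your write-up is simply a more explicit version of the paper's argument.
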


\begin{proof}
By the naturality of $\sq'$ (Lemma \ref{new1.2}),
we may assume that the motive $(X,\alpha)$ is split, that is, that
$(X,\alpha)$ is isomorphic to a finite sum of Tate motives.
The number of the summands is the rank.
By additivity of $\sq'$ (Lemma \ref{new1.2}), we may assume that the rank is $1$.
In this case $\alpha$ has an integral representative of the form $a\times b$
with some homogeneous
$a,b\in\CH(X)$ having odd $\deg (a\cdot b)$.
It follows that $\sq'(\alpha)=1\mod{4}$.
\end{proof}

Now we are going to define our second basic operation.
Consider
the total cohomological Steenrod operation $\Steen^\bullet$, \cite[Chapter XI]{EKM}.
This is a certain endomorphism of the cofunctor $\Ch$ of the category of smooth
$F$-varieties to the category of rings.


\begin{lemma}
\label{NK}
For any $\alpha,\beta\in\Ch(X\times X)$ one has
$$
\deg\Steen^\bullet(\beta^t\compose\alpha)=
\deg\big(\pr_{2*}(\Steen^\bullet(\alpha))\cdot\pr_{2*}(\Steen^\bullet(\beta))\cdot c_\bullet(-T_X)\big),
$$
where $T_X$ is the tangent bundle of $X$, $c_\bullet$ is the total
(modulo $2$) Chern class, $\pr_2\colon X\times X\to X$ is the projection
onto the second factor, and $\deg\colon\Ch\to\F_2$ is the degree
homomorphism modulo $2$.
\end{lemma}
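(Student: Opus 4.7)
The plan is to express $\beta^t\compose\alpha$ as a push-forward along $\pr_{13}\colon X^3\to X\times X$, apply the Wu-type formula for the total cohomological Steenrod operation, and then reorganize the resulting expression by projection formula and base change. Specifically, I would start from
\[
\beta^t\compose\alpha=(\pr_{13})_*\bigl(\pr_{12}^*\alpha\cdot\pr_{23}^*\beta^t\bigr),
\]
apply the Wu formula $\Steen^\bullet(f_*u)=f_*\bigl(\Steen^\bullet(u)\cdot c_\bullet(-T_f)\bigr)$ of \cite[Ch.~XI]{EKM} with $f=\pr_{13}$, and identify the relative tangent bundle as $T_{\pr_{13}}=\pr_2^*T_X$ (the tangent bundle pulled back from the middle factor). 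Combined with multiplicativity and naturality of $\Steen^\bullet$ and the identity $\Steen^\bullet(\beta^t)=(\Steen^\bullet\beta)^t$ (naturality for the factor-swap automorphism), this yields
\[
\Steen^\bullet(\beta^t\compose\alpha)=(\pr_{13})_*\bigl(\pr_2^*c_\bullet(-T_X)\cdot\pr_{12}^*\Steen^\bullet\alpha\cdot\pr_{23}^*(\Steen^\bullet\beta)^t\bigr).
\]

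I would then push everything further down to $\Spec F$. Since the structure map $X^3\to\Spec F$ factors through the middle-factor projection $\pr_2\colon X^3\to X$, taking $\deg$ on both sides and using the projection formula to detach $c_\bullet(-T_X)$ reduces the task to identifying
\[
(\pr_2)_*\bigl(\pr_{12}^*\Steen^\bullet\alpha\cdot\pr_{23}^*(\Steen^\bullet\beta)^t\bigr)
\]
with $(\pr_2)_*\Steen^\bullet\alpha\cdot(\pr_2)_*\Steen^\bullet\beta$ in $\Ch(X)$. This I would do by factoring $\pr_2=\pr_1\circ\pr_{23}$, using the projection formula to extract the $\pr_{23}^*$-factor, and invoking flat base change in the Cartesian square
\[
\begin{CD}
X^3 @>\pr_{12}>> X\times X \\
@V\pr_{23}VV @VV\pr_2V \\
X\times X @>\pr_1>> X
\end{CD}
\]
to rewrite $(\pr_{23})_*\pr_{12}^*$ as $\pr_1^*(\pr_2)_*$. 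A second application of the projection formula, combined with the elementary identity $(\pr_1)_*(\gamma^t)=(\pr_2)_*\gamma$ valid for any $\gamma\in\Ch(X\times X)$, produces the desired product on $X$.

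The main obstacle is the correct invocation of the Wu formula and the precise identification of the virtual relative tangent bundle $T_{\pr_{13}}=\pr_2^*T_X$, since the twisting factor $c_\bullet(-T_X)$ in the statement originates entirely from there; the rest is bookkeeping with projection formula, flat base change, and the naturality/multiplicativity of $\Steen^\bullet$, and assembling these pieces gives the claimed formula.
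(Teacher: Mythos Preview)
Your argument is correct and essentially identical to the paper's own proof: both write $\beta^t\compose\alpha=(\pr_{13})_*(\pr_{12}^*\alpha\cdot\pr_{23}^*\beta^t)$, apply the push-forward (Wu) formula for $\Steen^\bullet$ along $\pr_{13}$ picking up $c_\bullet(-T_X)$ from the middle factor, and then reduce via $\deg\circ\pr_{13*}=\deg\circ\pr_{1*}\circ\pr_{23*}$ using the projection formula twice together with $(\pr_1)_*(\gamma^t)=(\pr_2)_*\gamma$. The only cosmetic difference is that the paper uses external-product notation $(\alpha\times[X])\cdot([X]\times\beta^t)$ where you write pull-backs, and it absorbs your base-change step into the identity $\pr_{23*}(\Steen^\bullet(\alpha)\times[X])=\pr_{2*}(\Steen^\bullet(\alpha))\times[X]$.
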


\begin{proof}
Let
$\pr_{13}, \pr_{23}\colon X \times X \times X \rightarrow X \times X$,
and $\pr_{1}\colon X \times X \rightarrow X$
be the projections
$$
(x,y,z)\mapsto(x,z),\;(x,y,z)\mapsto(y,z),\;\text{ and }\; (x,y)\mapsto x.
$$
Since $\beta^t\compose\alpha =
\pr_{13*}\big((\alpha \times [X]) \cdot ([X] \times \beta^t)\big)$, we have
$$\Steen^\bullet(\beta^t\compose\alpha) =
\pr_{13*}\big((\Steen^\bullet(\alpha) \times [X])
\cdot ([X] \times \Steen^\bullet(\beta^t)) \cdot ([X] \times c_\bullet(-T_X) \times [X]) \big)\,.
$$
Note that $\deg \compose \pr_{13*} = \deg \compose \pr_{1*} \compose \pr_{23*}$.
By projection formula, we have
\begin{multline*}
 \pr_{23*}\big((\Steen^\bullet(\alpha) \times [X]) \cdot ([X]
 \times \Steen^\bullet(\beta^t)) \cdot ([X] \times c_\bullet(-T_X) \times [X]) \big) = \\
\Steen^\bullet(\beta^t) \cdot \big(\pr_{2*} (\Steen^\bullet(\alpha))
\times [X]\big) \cdot  \big(c_\bullet(-T_X) \times [X]\big)
\end{multline*}
and
\begin{multline*}
$$\pr_{1*} \Big(\Steen^\bullet(\beta^t) \cdot \big(\pr_{2*} (\Steen^\bullet(\alpha))\times [X]\big)
\cdot  \big(c_\bullet(-T_X) \times [X]\big)\Big) = \\
\pr_{2*}(\Steen^\bullet(\alpha))\cdot\pr_{2*}(\Steen^\bullet(\beta))\cdot c_\bullet(-T_X)\, .
\end{multline*}
Therefore
\begin{equation*} \deg\Steen^\bullet(\beta^t\compose\alpha)=
\deg\big(\pr_{2*}(\Steen^\bullet(\alpha))\cdot\pr_{2*}(\Steen^\bullet(\beta))\cdot c_\bullet(-T_X)\big) \, .
\qedhere
\end{equation*}
\end{proof}

\begin{dfn}
We define our second basic operation
$\st\colon\Ch(X\times X)\to\Z/4\Z$ as follows.
For any $\alpha\in\Ch(X\times X)$ we choose an integral representative
$\mf{a}\in\CH(X\times X)$
of
$\Steen^\bullet(\alpha)\in\Ch(X\times X)$ and
set
$$
\st(\alpha):=\deg\big(\pr_{2*}(\mf{a})^2\cdot
c_\bullet(-T_X)\big)\pmod{4},
$$
where $c_\bullet$ refers now to the {\em integral} total Chern class.
Clearly, the map $\st$ is well-defined because the choice of $\mf{a}$ does not
affect the resulting value.
\end{dfn}


\begin{lemma}
\label{new st}
Let $\st$ be the introduced operation.
Then
\begin{enumerate}
\item
$
\st(\alpha)\mod{2}=\deg\Steen^\bullet(\alpha^t\compose\alpha)
$ for any $\alpha$;
in particular,
$
\st(\alpha)\mod{2}=\deg\Steen^\bullet(\alpha)
$
if the correspondence $\alpha$ is a symmetric projector;
\item
$
\st(\alpha+\beta)=\st(\alpha)+\st(\beta)
$
for any correspondences $\alpha$ and $\beta$ such that
$\deg\Steen^\bullet(\beta^t\compose\alpha)=0$;
in particular, the additivity formula holds for orthogonal symmetric
correspondences $\alpha,\beta$;
\item
$\st(\alpha)_E=\st(\alpha_E)$
for any field extension $E/F$ and any $\alpha$.
\end{enumerate}
\end{lemma}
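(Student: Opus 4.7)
The plan is to derive all three parts by unpacking the definition of $\st$ and combining it with the identity supplied by Lemma \ref{NK}, keeping careful track of the distinction between mod $2$ and mod $4$ reductions.

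For part (1), I would fix an integral representative $\mf{a}\in\CH(X\times X)$ of $\Steen^\bullet(\alpha)\in\Ch(X\times X)$. By definition, $\st(\alpha)\equiv\deg\bigl(\pr_{2*}(\mf{a})^2\cdot c_\bullet(-T_X)\bigr)\pmod{4}$, and reducing further modulo $2$ one may replace each integral class by its mod $2$ reduction, obtaining
$$
\st(\alpha)\bmod 2 \;=\; \deg\bigl(\pr_{2*}(\Steen^\bullet(\alpha))^2\cdot c_\bullet(-T_X)\bigr)\in\F_2.
$$
Applying Lemma \ref{NK} with $\beta:=\alpha$ identifies the right-hand side with $\deg\Steen^\bullet(\alpha^t\compose\alpha)$. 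The claim for symmetric projectors follows because $\alpha^t\compose\alpha=\alpha\compose\alpha=\alpha$.

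For part (2), let $\mf{a},\mf{b}$ be integral representatives of $\Steen^\bullet(\alpha)$ and $\Steen^\bullet(\beta)$; by additivity of $\Steen^\bullet$ on $\Ch$, the sum $\mf{a}+\mf{b}$ represents $\Steen^\bullet(\alpha+\beta)$. Expanding the square,
$$
\pr_{2*}(\mf{a}+\mf{b})^2 \;=\; \pr_{2*}(\mf{a})^2 \;+\; \pr_{2*}(\mf{b})^2 \;+\; 2\,\pr_{2*}(\mf{a})\cdot\pr_{2*}(\mf{b}),
$$
so modulo $4$ one has
$$
\st(\alpha+\beta)\;\equiv\;\st(\alpha)+\st(\beta)+2\deg\bigl(\pr_{2*}(\mf{a})\cdot\pr_{2*}(\mf{b})\cdot c_\bullet(-T_X)\bigr).
$$
The key point is that the cross term vanishes mod $4$: its mod $2$ reduction equals, by Lemma \ref{NK}, the class $\deg\Steen^\bullet(\beta^t\compose\alpha)\in\F_2$, which is zero by hypothesis, so the integer inside the ``$2\cdot$'' is even. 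The ``in particular'' clause is then immediate, since for orthogonal symmetric $\alpha,\beta$ one has $\beta^t\compose\alpha=\beta\compose\alpha=0$ and thus $\Steen^\bullet(\beta^t\compose\alpha)=0$.

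Part (3) is purely formal: the diagonal, projections, tangent bundle, Chern classes, Steenrod operations, and degree map are all compatible with base change, and an integral representative of $\Steen^\bullet(\alpha)$ restricts to an integral representative of $\Steen^\bullet(\alpha_E)$. The only delicate step in the whole proof is part (2), where the $2$-adic bookkeeping matters: Lemma \ref{NK} is stated in $\F_2$, so it only controls the cross term modulo $2$, and one must observe that this is exactly what is needed to kill the factor $2$ in front. The remaining work is routine manipulation of pull-back/push-forward identities that is already packaged in Lemma \ref{NK}.
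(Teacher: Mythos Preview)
Your proof is correct and follows essentially the same approach as the paper's own proof: each part is handled the same way (Lemma~\ref{NK} with $\beta=\alpha$ for (1), expanding the square and killing the cross term via Lemma~\ref{NK} for (2), and ``trivial'' for (3)), with your write-up simply spelling out in more detail the mod~$2$ versus mod~$4$ bookkeeping that the paper leaves implicit.
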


\begin{proof}
(1)
This is the particular case $\beta=\alpha$ of Lemma \ref{NK}.

(2)
Let $\mf{a},\mf{b}\in\CH(X\times X)$ be integral representatives of
$\Steen^\bullet(\alpha),\Steen^\bullet(\beta)$.
Then $\mf{a}+\mf{b}$ is an integral representative of $\Steen^\bullet(\alpha+\beta)$
and it suffices to show that
$$
\deg\big(\pr_{2*}(\mf{a})\cdot\pr_{2*}(\mf{b})\cdot c_\bullet(-T_X)\big)\equiv 0\pmod{2}.
$$
This is indeed so by Lemma \ref{NK} and the condition on $\alpha,\beta$.

(3)
Trivial.
\end{proof}

The two operations $\sq$ and $\st$ are related as follows:

\begin{lemma}
\label{symmop}
Let $d=\dim X$.
For any symmetric projector $\alpha\in\Ch^d(X\times X)$ one has
$\sq(\alpha)\equiv\st(\alpha)\pmod{2}$.
If moreover $X$ has no closed points of odd degree, then $\sq(\alpha)=\st(\alpha)$.
\end{lemma}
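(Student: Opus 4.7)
The plan is to imitate the proof of Lemma \ref{NK} at the level of integral Chow groups, and to read the resulting identity both modulo $2$ and modulo $4$.

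First, I would fix an integer representative $\mf{a}_0 \in \CH^d(X\times X)$ of $\alpha$ and, for each index $0<i<d$, an arbitrary integer lift $\mf{a}_i \in \CH^{d+i}(X\times X)$ of $\Steen^i(\alpha)$. Setting
\[
\mf{a} := \mf{a}_0 + \mf{a}_1 + \cdots + \mf{a}_{d-1} + \mf{a}_0\cdot\mf{a}_0,
\]
the identities $\Steen^d(\alpha) = \alpha\cdot\alpha$ and $\Steen^i(\alpha)=0$ for $i>d$ show that $\mf{a}$ is an integer representative of $\Steen^\bullet(\alpha) \in \Ch(X\times X)$. Only the codimension-$2d$ component of $\mf{a}$, namely $\mf{a}_0\cdot\mf{a}_0$, contributes to the degree, so $\deg\mf{a} = \deg(\mf{a}_0\cdot\mf{a}_0)$, and by definition of $\sq$ this integer reduces modulo $4$ to $\sq(\alpha)$.

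Next, I would form the comparison cycle
\[
R := \pr_{13*}\bigl((\mf{a}\times[X])\cdot([X]\times\mf{a}^t)\cdot([X]\times c_\bullet(-T_X)\times[X])\bigr) \in \CH(X\times X),
\]
where $c_\bullet$ is the integral total Chern class. Repeating the projection-formula computation from the proof of Lemma \ref{NK} verbatim with integer cycles (the one needed identity, $\pr_{1*}\mf{a}^t = \pr_{2*}\mf{a}$, is tautological), one obtains
\[
\deg R = \deg\bigl((\pr_{2*}\mf{a})^2 \cdot c_\bullet(-T_X)\bigr),
\]
so $\deg R$ reduces modulo $4$ to $\st(\alpha)$. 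On the other hand, reducing the same identity modulo $2$ is exactly the content of Lemma \ref{NK}: it forces $R$ to equal $\Steen^\bullet(\alpha^t\compose\alpha) = \Steen^\bullet(\alpha)$ in $\Ch(X\times X)$, the last equality holding because $\alpha$ is a symmetric projector. Thus both $\mf{a}$ and $R$ are integer representatives of $\Steen^\bullet(\alpha)$, so $R - \mf{a} = 2\mf{e}$ for some $\mf{e} \in \CH(X\times X)$.

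Taking degrees gives $\st(\alpha) - \sq(\alpha) \equiv 2\deg\mf{e} \pmod{4}$. Since $2\deg\mf{e}$ is always even, the first assertion $\sq(\alpha)\equiv\st(\alpha)\pmod 2$ follows. Under the additional hypothesis that $X$ has no closed point of odd degree, the same holds for $X\times X$: every closed point $p$ of $X\times X$ projects via $\pr_1$ to a closed point of $X$ whose (even) residue degree divides $[\kappa(p):F]$. Consequently every $0$-cycle on $X\times X$ has even degree, so $\deg\mf{e}\in 2\Z$ and $2\deg\mf{e}\in 4\Z$, upgrading the congruence to mod $4$ and finishing the proof. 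The only place requiring some care is the verbatim transportation of the projection-formula computation from Lemma \ref{NK} to integer cycles, but this is routine as it uses only smooth base change and the projection formula, both valid integrally.
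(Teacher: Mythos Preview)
Your proof is correct and is essentially the paper's argument made explicit: where the paper asserts in one line that ``by Lemma~\ref{new st}, the value $\st(\alpha)$ is given by the degree of certain integral representative $\mf{b}$ of $\Steen^d(\alpha)$,'' you actually construct that representative as the $0$-dimensional component of your cycle $R$, and the rest of both arguments is the observation $\deg\CH_0(X\times X)=\deg\CH_0(X)$. One small imprecision: the equality $R\bmod 2=\Steen^\bullet(\alpha^t\compose\alpha)$ that you need is the first displayed formula in the \emph{proof} of Lemma~\ref{NK} (the Riemann--Roch identity for $\Steen^\bullet$ along $\pr_{13}$), not the statement of the lemma, which is phrased only at the level of degrees.
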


\begin{proof}
The value $\sq(\alpha)$ is given by the degree of certain integral representative $\mf{a}$ of
$\alpha^2$.
By Lemma \ref{new st},
the value $\st(\alpha)$ is given by the degree of certain integral representative $\mf{b}$ of
$\Steen^d(\alpha)$.
Since $\Steen^d(\alpha)=\alpha^2$, it follows that $\mf{a}-\mf{b}\in2\CH(X\times
X)$.
Since $\deg\CH(X\times X)=\deg\CH(X)$, we get that
$\deg\mf{a}-\deg\mf{b}\in2\deg\CH(X)$.
In particular, $\deg\mf{a}-\deg\mf{b}\in4\Z+2\deg\CH(X)$ as claimed.
\end{proof}

\begin{rem}
Lemma \ref{symmop} in particular says that the difference $\st-\sq$ restricted to the set
of symmetric projectors $\mathrm{SP}\subset\Ch^d(X\times X)$, is divisible by $2$.
The resulting map
$$
(\st-\sq)/2\colon\mathrm{SP}\to\Z/2\Z
$$
can be viewed as a replacement for a certain symmetric operation.
One advantage of this replacement is that it works over an arbitrary field of characteristic
$\ne2$.
Note that symmetric operations are defined only over fields of characteristic $0$.
\end{rem}

\section
{Chow ring of quasi-split unitary grassmannians}
\label{Chow ring of split unitary grassmannians}

Let $K$ be a field of arbitrary characteristic.
Let $V$ be a finite-dimensional vector space over $K$.
We set $n:=\dim V$.
Although in relation with our main purpose we are only interested in the case of even $n$,
we treat the case of odd $n$ because it differs from the even one only in a few places.
For any subset $I\subset\{1,2\dots,[n/2]\}$,
where $[n/2]=n/2$ for even $n$ and $[n/2]=(n-1)/2$ for odd $n$,
we write $G_I(V)$ for the
variety of flags of subspaces in $V$ of dimensions given by $I$.
In particular, for any integer $k\in\{1,\dots,[n/2]\}$, the variety
$G_{\{k\}}(V)$,
which we simply denote as $G_k(V)$, is the grassmannian of $k$-planes.

Let us consider the closed subvariety $H_I=H_I(V)$ of the product $G_I(V)\times
G_I(V^*)$, where $V^*$ is the dual vector space of $V$, defined by the orthogonality condition:
$H_I$ is the variety of pairs of flags such that each space of the first
flag is orthogonal to the corresponding space of the second
flag (or, equivalently, the biggest space of the first flag is orthogonal to the biggest
space of the second flag).

\begin{example}
The variety $H_k$ is the variety of pairs of $k$-planes
$U\subset V$, $U'\subset V^*$ such that $U\cdot U'=0$.
It is canonically isomorphic to the variety of flags in $V$ consisting of a $k$-plane
contained in a ($n-k$)-plane.
\end{example}

We fix now a non-degenerate symmetric bilinear form $\mf{b}$ on $V$ which gives
a self-dual isomorphism $V \simeq V^*$.
This endows the variety $G_I(V)\times G_I(V^*)$ with a switch involution
and the subvariety $H_I$ is stable under it.
The induced involution on
$\CH(H_I)$ will be denoted by $\sigma$.
We are going to show that $\sigma$ does not depend on the choice of $\mf{b}$.

\begin{lemma}
The involution on $\CH(G_I(V)\times G_I(V^*))$ induced by the switch
involution on $G_I(V)\times G_I(V^*)$ given by $\mf{b}$ does not depend on $\mf{b}$.
\end{lemma}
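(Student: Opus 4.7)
The plan is a family/homotopy argument: vary the form and use homotopy invariance of Chow groups to show all the fiber pullbacks coincide. Let $W$ be the affine $F$-space of symmetric bilinear forms on $V$ and let $U\subset W$ be the open subvariety of non-degenerate ones. Set $X:=G_I(V)\times G_I(V^*)$ and write $p_U\colon U\times X\to X$ for the projection. The first step is to assemble all the switch involutions $\sigma_{\mf{b}}$ into a single morphism $\Sigma\colon U\times X\to U\times X$ over $U$: since $\mf{b}\mapsto\phi_{\mf{b}}$ is linear in $\mf{b}$ and $\phi_{\mf{b}}^{-1}$ is regular on $U$ (the entries of the inverse matrix are polynomials divided by $\det\mf{b}$), the formula $(\mf{b},\Lambda,\Lambda')\mapsto(\mf{b},\phi_{\mf{b}}^{-1}(\Lambda'),\phi_{\mf{b}}(\Lambda))$ defines such a $\Sigma$, and its fiber over any $\mf{b}\in U(F)$ is $\sigma_{\mf{b}}$.

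The core observation is that the pullback $i_{\mf{b}}^*\colon\CH(U\times X)\to\CH(X)$ at an $F$-point $\mf{b}\in U(F)$ does not depend on the choice of $\mf{b}$. Indeed, the localization sequence for the open immersion $U\hookrightarrow W\cong\A^N$, combined with homotopy invariance $\CH(\A^N\times X)\cong\CH(X)$, produces a surjection $p_U^*\colon\CH(X)\twoheadrightarrow\CH(U\times X)$. Any $F$-point $\mf{b}$ provides a section $i_{\mf{b}}$ of $p_U$, so $i_{\mf{b}}^*\circ p_U^*=\id_{\CH(X)}$; this forces $p_U^*$ to be an isomorphism and identifies $i_{\mf{b}}^*$ with its inverse. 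The inverse is canonical, so $i_{\mf{b}}^*$ is the same map for every $\mf{b}\in U(F)$.

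To conclude, the pullback square
$$
\begin{CD}
X @>\sigma_{\mf{b}}>> X\\
@Vi_{\mf{b}}VV @Vi_{\mf{b}}VV\\
U\times X @>\Sigma>> U\times X
\end{CD}
$$
gives, by functoriality of pullback, the identity $\sigma_{\mf{b}}^*\alpha=i_{\mf{b}}^*\Sigma^* p_U^*\alpha$ for every $\alpha\in\CH(X)$. The maps $p_U^*$ and $\Sigma^*$ are fixed, and the previous step shows that $i_{\mf{b}}^*$ is independent of $\mf{b}$; hence $\sigma_{\mf{b}}^*$ depends only on $\alpha$, not on $\mf{b}$.

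The main technical point is the surjectivity of $p_U^*$, which rests on standard homotopy invariance and the localization sequence for open immersions applied to $U\times X$ (no cellularity of $X$ is needed). The construction of $\Sigma$ as a regular morphism over $U$ is routine once one writes $\phi_{\mf{b}}^{-1}$ as a rational function regular on $U$. Everything else reduces to the fiber-square compatibility of pullback.
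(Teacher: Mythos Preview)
Your argument is correct, but it follows a different path from the paper's one-line proof. The paper simply observes that any two non-degenerate symmetric bilinear forms $\mf{b},\mf{b}'$ give switch involutions that differ by the action of an element of $\Aut(V)\times\Aut(V^*)$ (namely $(\phi_{\mf{b}'}^{-1}\phi_{\mf{b}},\,\phi_{\mf{b}'}\phi_{\mf{b}}^{-1})$), and then quotes the fact that this connected group acts trivially on $\CH(G_I(V)\times G_I(V^*))$. Your route instead parametrizes the forms themselves by the open set $U$ inside affine space, assembles a global $\Sigma$ over $U$, and uses homotopy invariance plus localization to show that $i_{\mf{b}}^*$ is the canonical inverse of $p_U^*$, hence independent of $\mf{b}$. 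The two arguments are cousins: the triviality of a connected group action on Chow groups is itself typically proved by exactly this kind of ``open in affine space'' homotopy argument. What you gain is a self-contained proof that does not rely on the cited external result; what the paper gains is brevity and a more conceptual formulation in terms of group actions. One small notational slip: in this section the base field is $K$, not $F$.
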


\begin{proof}
The group $\Aut(V)\times\Aut(V^*)$ acts trivially on $\CH(G_I(V)\times
G_I(V^*))$, \cite[Corollary 4.2]{gug}.
\end{proof}

\begin{lemma}
The ring $\CH(H_I)$ is generated by the Chern classes of pull-backs
of the tautological bundles on $G_I(V)\times G_I(V^*)$.
\end{lemma}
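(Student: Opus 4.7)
The plan is to realize $H_I$ as a relative flag variety over $G_I(V)$ via the first projection and then apply the standard fact that the Chow ring of a flag bundle is generated over the base by Chern classes of the tautological subbundles.

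Write the elements of $I$ as $i_1<\dots<i_k$ and let $p\colon H_I\to G_I(V)$ be the restriction of the first projection. Given a flag $U_{i_1}\subset\dots\subset U_{i_k}$ in $V$, a point of the fiber $p^{-1}(U_\bullet)$ is a flag $U'_{i_1}\subset\dots\subset U'_{i_k}$ in $V^*$ with $\dim U'_{i_j}=i_j$ and $U'_{i_j}\subset U_{i_j}^\perp$ for each $j$. Since the chain of annihilators satisfies $U_{i_k}^\perp\subset\dots\subset U_{i_1}^\perp$, the orthogonality conditions for $j<k$ follow automatically from the single condition $U'_{i_k}\subset U_{i_k}^\perp$. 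Hence the fiber is canonically the partial flag variety $G_I(U_{i_k}^\perp)$, and globalizing, $H_I=\mathrm{Fl}_I(\mathcal{U}_{i_k}^\perp)$ as a $G_I(V)$-scheme, where $\mathcal{U}_{i_k}^\perp\subset V^*\otimes\mathcal{O}_{G_I(V)}$ is the annihilator of the tautological subbundle $\mathcal{U}_{i_k}$ on $G_I(V)$.

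I would then apply the iterated Grassmann bundle formula: for any vector bundle $\mathcal{E}$ on a smooth base $B$, $\CH(\mathrm{Fl}_I(\mathcal{E}))$ is generated as a $\CH(B)$-algebra by the Chern classes of the tautological subbundles on $\mathrm{Fl}_I(\mathcal{E})$. Applied to $B=G_I(V)$ and $\mathcal{E}=\mathcal{U}_{i_k}^\perp$, this says $\CH(H_I)$ is generated over $p^*\CH(G_I(V))$ by the Chern classes of a tautological flag $\mathcal{U}'_{i_1}\subset\dots\subset\mathcal{U}'_{i_k}$ on $H_I$. By construction these bundles coincide with the restrictions to $H_I$ of $\pr_2^*\mathcal{U}'_{i_j}$, where $\mathcal{U}'_{i_j}$ is the tautological subbundle of rank $i_j$ on $G_I(V^*)$.

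Finally, $\CH(G_I(V))$ is generated by the Chern classes of its own tautological subbundles $\mathcal{U}_{i_j}$, by the same flag bundle formula applied over $\Spec K$; and $p^*$ carries these to the restrictions of $\pr_1^*\mathcal{U}_{i_j}$ on $H_I$. Combining both generations yields the lemma. The only real point requiring care will be the identification of $H_I$ with the flag bundle $\mathrm{Fl}_I(\mathcal{U}_{i_k}^\perp)$ and the verification that the tautological flag on this flag bundle is literally the pull-back, along $\iota\colon H_I\hookrightarrow G_I(V)\times G_I(V^*)$, of the tautological flag coming from the second factor; once these two bookkeeping items are in place, the rest is the classical flag bundle theorem.
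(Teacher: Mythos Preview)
Your argument is correct and is essentially the same as the paper's: decompose $H_I\to\Spec K$ as a chain of grassmannian bundles via the projection to $G_I(V)$ and apply the projective/grassmannian bundle theorem \cite[Proposition~14.6.5]{Fulton} at each stage. The paper states this in one line, while you have made the identification $H_I\simeq\mathrm{Fl}_I(\mathcal{U}_{i_k}^{\perp})$ and the matching of tautological bundles explicit.
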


\begin{proof}
Using projection on components, decompose the structure morphism $H_I\to\Spec
K$ into a chain of grassmannian bundles and apply \cite[Proposition 14.6.5]{Fulton}.
\end{proof}

\begin{cor}
The involution $\sigma$ on $\CH(H_I)$ does not depend on $\mf{b}$.
It is the unique involution for which the diagram
$$
\begin{CD}
\CH(G_I(V)\times G_I(V^*)) @>>> \CH(G_I(V)\times G_I(V^*))\\
@VVV @VVV\\
\CH(H_I) @>>> \CH(H_I)
\end{CD}
$$
commutes.
\qed
\end{cor}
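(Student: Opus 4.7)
The plan is to deduce the corollary from the two preceding lemmas by an appeal to surjectivity of the restriction map on Chow rings along the closed embedding $i\colon H_I\hookrightarrow G_I(V)\times G_I(V^*)$.

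First I would observe that $i^*\colon\CH(G_I(V)\times G_I(V^*))\to\CH(H_I)$ is surjective. The previous lemma shows that $\CH(H_I)$ is generated by Chern classes of pull-backs of the tautological bundles on the ambient product. Since those Chern classes are themselves pull-backs of classes in $\CH(G_I(V)\times G_I(V^*))$ along $i$, surjectivity of $i^*$ follows.

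Next I would note that the diagram in the statement commutes simply by functoriality of flat/l.c.i.\ pull-back: the switch automorphism of $G_I(V)\times G_I(V^*)$ determined by $\mf{b}$ restricts to an automorphism of $H_I$ (because $H_I$ is stable under it, as already remarked), and the induced maps on Chow groups fit into a commutative square whose vertical arrows are $i^*$. By definition, the bottom horizontal arrow of this square is $\sigma$.

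The rest is formal. Since $i^*$ is surjective, any involution on $\CH(H_I)$ making the square commute is uniquely determined by the top horizontal arrow; this gives uniqueness. For independence of $\mf{b}$, the top arrow is independent of $\mf{b}$ by the first of the two preceding lemmas, while $i^*$ is a geometric datum not depending on $\mf{b}$ (the closed embedding $i$ is defined by the orthogonality condition alone). Hence the unique $\sigma$ making the square commute is independent of $\mf{b}$ as well. No step here is really an obstacle; the only point that deserves a sentence of justification is the surjectivity of $i^*$, which is essentially a restatement of the generation lemma.
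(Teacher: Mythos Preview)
Your proposal is correct and is exactly the argument the paper has in mind: the corollary carries a bare \qed, meaning it is intended as an immediate consequence of the two preceding lemmas, and you have spelled out precisely how---surjectivity of $i^*$ from the generation lemma, independence of the top arrow from the first lemma, and a formal diagram chase using that surjectivity.
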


We are going to study the subring $\CH(H_I)^\sigma\subset\CH(H_I)$ of the
$\sigma$-invariant elements.
Why we are interested in this subring is explained in Example \ref{link}.
More precisely, we will study the quotient of this subring by its ``elementary part'' --
the norm ideal $(1+\sigma)\CH(H_I)$.
We are basically interested in the case of $\#I=1$.

\subsection{$I=\{1\}$}

We start with the case $I=\{1\}$.
Note that the varieties $G_1(V)$ and $G_1(V^*)$ are projective spaces of dimension $n-1$.
If we choose a basis of $V$ and use the dual basis of $V^*$, then we identify $G_1(V)$ and $G_1(V^*)$ with
$\PS^{n-1}$, and $H_1$ becomes the
hypersurface in $\PS^{n-1}\times\PS^{n-1}$ given by the equation
$\sum_{i=1}^n x_iy_i = 0$, where
$x_i$ and $y_i$ are the respective homogeneous coordinates.
Such a hypersurface is known under the name {\em Milnor hypersurface},
\cite[\S2.5.3]{MR2286826}.

Let $h$ be the hyperplane class in $\CH^1(G_1(V))$ or in $\CH^1(G_1(V^*))$
and let us define the elements $a,b\in\CH^1(H_1)$ as the
pull-backs of $h\times 1$ and $1\times h$.
For any $i\geq0$, one has $a^i=c_i(-\mathcal{A})$ and $b^i=c_i(-\mathcal{B})$, where
$\mathcal{A}$ and $\mathcal{B}$ are the corresponding tautological vector bundles on $H_1$.

The ring $\CH(H_1)$ is generated by the two elements $a,b$ subject to the
relations $a^n=0$ and $a^{n-1}+a^{n-2}(-b)+\dots +(-b)^{n-1}=0$
(implying $b^n=0$).
The involution $\sigma$ exchanges the generators $a$ and $b$.

Let $\cT$ be the vector bundle on $H_1$ whose fiber over a point $(U,U')$ is
$U\oplus U'$ (i.e., $\cT=\mathcal{A}\oplus \mathcal{B}$).
We write $c_i$ for $c_i(-\cT)$.
We have $c_i=a^i+a^{i-1}b+\dots+b^i\in\CH(H_1)^\sigma$.

The elements $c_{n-1},c_n,\dots$ are divisible by $2$.
Indeed,
$$
c_{n-1}/2=a^{n-1}+a^{n-3}b^2+\dots+ab^{n-2}=a^{n-2}b+\dots+b^{n-1}
$$
and $c_{n-1+i}/2=(c_{n-1}/2)\cdot a^i=(c_{n-1}/2)\cdot b^i$ for any $i\geq0$.

\begin{lemma}
\label{1}
The ring $\CH(H_1)^\sigma/(1+\sigma)\CH(H_1)$
is {\em additively} generated by the classes of the following elements:
$$
c_0,c_1,\dots,c_{n-2}\;\text{ and }\;c_{n-1}/2,c_n/2,\dots.
$$
Moreover, for any odd $i\leq n-2$ the class of $c_i$ is $0$, for any
even $i\geq n-1$ the class of $c_i/2$ is $0$, and for any $i>2n-3$ the class of $c_i/2$ is $0$.
\end{lemma}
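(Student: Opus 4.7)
My plan is to first derive the basic relations in $\CH(H_1)=\Z[a,b]/(a^n,b^n,R)$, then handle the three ``easy'' triviality statements directly, and finally address both the additive generation and the triviality of $c_i/2$ for even $i\geq n-1$ via a modulo-$2$ calculation.

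I would begin by deducing $b^n=0$ from the factorization $(a+b)R=a^n-(-b)^n$ together with $a^n=0=R$. Setting $R_i:=\sum_{k=0}^i(-1)^k a^{i-k}b^k$, the recursion $R_{i+1}=aR_i+(-b)^{i+1}$ with base $R_{n-1}=R=0$ and $b^{n+j}=0$ yields $R_i=0$ in $\CH(H_1)$ for all $i\geq n-1$. Since $c_i\pm R_i$ equals twice the even/odd-indexed partial sum, this shows $c_i\in 2\CH(H_1)$ for $i\geq n-1$ and produces the two expressions $c_i/2=\sum_{k\text{ even}}a^{i-k}b^k=\sum_{k\text{ odd}}a^{i-k}b^k$; truncation via $a^k=b^k=0$ for $k\geq n$ then gives $c_{n-1+j}=a^j c_{n-1}=b^j c_{n-1}$ and the analogous factorization of $c_{n-1+j}/2$. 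The three easy triviality claims are immediate: for odd $i\leq n-2$, pairing yields $c_i=\sum_{k<i/2}(1+\sigma)(a^{i-k}b^k)$ with all monomials in the basis; for $i>2n-3$, every monomial $a^kb^{i-k}$ in $c_i$ satisfies $k\geq n$ or $i-k\geq n$, the sole exceptional $a^{n-1}b^{n-1}$ (when $i=2n-2$) vanishing upon expanding $b^{n-1}$ via $R=0$.

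For the remaining assertions I would exploit that $Q:=\CH(H_1)^\sigma/(1+\sigma)\CH(H_1)$ is annihilated by $2$ (since $(1+\sigma)x=2x$ for $\sigma$-invariant $x$), and that $\CH(H_1)$ is torsion-free as a $\PS^{n-2}$-bundle Chow ring over $\PS^{n-1}$. This reduces both generation and the remaining triviality to a degree-wise calculation in $\Ch(H_1):=\CH(H_1)\otimes\F_2=\F_2[a,b]/(a^n,b^n,c_{n-1})$. For $d\leq n-2$, no relation intervenes, and the standard $\sigma$-action on the monomial basis gives $Q^d\cong\F_2$ in even degrees (generated by the class of $c_d$, which equals the class of the fixed monomial $a^{d/2}b^{d/2}$) and $Q^d=0$ in odd degrees. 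For $d\in[n-1,2n-3]$, I would apply the Tate cohomology long exact sequence for $\sigma$ to the short exact sequence
$$
0\to\langle a^{d-n+1}c_{n-1}\rangle\to W^d\to\Ch^d\to 0,
$$
where $W^d:=(\F_2[a,b]/(a^n,b^n))^d$: the relation $a^{d-n+1}c_{n-1}=0$ expresses the sole fixed monomial $a^{d/2}b^{d/2}$ (when $d$ is even) as a sum of $(1+\sigma)$-images, giving $Q^d=0$; for $d$ odd, a non-zero connecting map to $H^1(\sigma,\langle a^{d-n+1}c_{n-1}\rangle)\cong\F_2$ yields $Q^d\cong\F_2$.

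The main obstacle is identifying the generator of $Q^d$ for odd $d\in[n-1,2n-3]$ with the class of $c_d/2$. I would verify this by a lifting argument in $W^d$: the two natural lifts $\tilde x=\sum_{k\text{ even}}a^{d-k}b^k$ and $\sum_{k\text{ odd}}a^{d-k}b^k$ of $c_d/2\bmod 2$ differ by $r:=a^{d-n+1}c_{n-1}$ and are exchanged by $\sigma$; since for $d$ odd every $\sigma$-orbit in $W^d$ is a pair, $\sigma\tilde x-\tilde x=r\neq 0$ for either lift, so the image of $c_d/2$ under the connecting homomorphism is non-trivial. Finally, the $\F_2$-triviality $c_i/2\equiv 0$ for even $i\geq n-1$ lifts to $\Z$ using torsion-freeness: writing $c_i/2=(1+\sigma)\tilde y+2w$ and applying $\sigma$-invariance of $c_i/2$ forces $\sigma w=w$, whence $2w=(1+\sigma)w$ and the equality closes integrally.
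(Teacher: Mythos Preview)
Your argument is correct, but it is considerably more elaborate than the paper's. The paper proceeds by direct inspection of the $\sigma$-module $\CH^d(H_1)$ over $\Z$ in each degree range. For $d\leq n-2$ both you and the paper use the free monomial basis. For $d\in[n-1,2n-3]$ the paper simply observes that $\CH^d(H_1)$ is the free abelian group on the $2n-1-d$ monomials $a^{n-1}b^{d-n+1},\dots,a^{d-n+1}b^{n-1}$ modulo the single alternating-sum relation, and reads off $\hat H^0(\sigma,\CH^d)$ by hand: for $d$ even the fixed monomial $a^{d/2}b^{d/2}$ becomes a norm via the relation, while for $d$ odd the quotient is $\F_2$ generated by $c_d/2$. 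For $d>2n-3$ the paper just notes $\CH^d=0$ since $\dim H_1=2n-3$.

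Your route instead passes to $\Ch=\CH/2$, uses the Tate long exact sequence for $0\to\langle r\rangle\to W^d\to\Ch^d\to 0$, identifies the connecting image of $c_d/2\bmod 2$, and then lifts back to $\Z$ via the torsion-freeness argument. This works, and the lifting step at the end is exactly the statement that the natural map $\hat H^0(\sigma,\CH^d)\hookrightarrow\hat H^0(\sigma,\Ch^d)$ is injective; you use this both to deduce $Q^d=0$ from $\hat H^0(\Ch^d)=0$ in even degree and to bound $|Q^d|\leq 2$ in odd degree. The payoff of your approach is a cleaner separation between the combinatorics (done over $\F_2$) and the integrality issue (handled once, uniformly). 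The cost is length: the paper's three-line case analysis over $\Z$ accomplishes the same thing without any reduction mod $2$, connecting homomorphisms, or lifting. Your treatment of $i>2n-3$ is also more laborious than needed; the one-line observation $\CH^{>\dim H_1}=0$ suffices.
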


\begin{proof}
The group $\CH^{<n-1}(H_1)$ is freely generated by $a^ib^j$ with
$i+j<n-1$.
Therefore the quotient $\CH(H_1)^\sigma/(1+\sigma)\CH(H_1)$ in
codimensions $<n-1$ is (additively) generated by the classes of
$a^ib^i$ ($2i<n-1$) which are also represented by $c_{2i}$.

For any $i=n-1,n,\dots,2n-3$, the group $\CH^i(H_1)$ is generated by
the elements
$$
a^{n-1}b^{i-(n-1)},a^{n-2}b^{i-(n-2)},\dots,a^{i-(n-1)}b^{n-1}
$$
whose alternating sum is $0$,
and this is the
only relation on the generators.
The quotient of the subgroup of $\sigma$-invariant elements by the norms
is therefore trivial for even $i$ and generated by the class of
$c_i/2$ for odd $i$.

Finally, for $i>2n-3=\dim H_1$, the group $\CH^i(H_1)$ is trivial.
\end{proof}

\begin{rem}
\label{complete}
Here is a complete analysis of the graded
ring
$$
R:=\CH(H_1)^\sigma/(1+\sigma)\CH(H_1),
$$
which is now easily done.
Similarities as well as differences with the Chow ring of a split
projective quadric are striking.

In the case of even $n$, the ring $R$
is generated (as a ring) by two elements: (the classes of) $ab\in R^2$ and $c:=c_{n-1}/2\in
R^{n-1}$ ($R^2$ and $R^{n-1}$ are the graded components of $R$).
The relations are: $(ab)^{n/2}=0$ and $c^2=0$.
The non-zero homogeneous elements of $R$ are as follows:
$$
(ab)^i=c_{2i},\;\;
c(ab)^i=c_{n-1+2i}/2,\;\;
\text{with }i=0,1,\dots,(n-2)/2.
$$

If $n$ is odd, the ring $R$
is generated by two elements: (the classes of) $ab\in R^2$ and $c:=c_n/2\in
R^n$.
The relations are: $(ab)^{(n-1)/2}=0$ and $c^2=0$.
The non-zero homogeneous elements of $R$ are as follows:
$$
(ab)^i=c_{2i},\;\;
c(ab)^i=c_{n+2i}/2,\;\;
\text{with }i=0,1,\dots,(n-3)/2.
$$

The geometric description of the generators (for arbitrary parity of $n$) is as follows.
The element $(ab)^i$ is the pullback of $h^i\times h^i\in\CH^{2i}\big(G_1(V)\times
G_1(V^*)\big)$.
To describe $c(ab)^i$, we take some
orthogonal subspaces $U\subset V$, $U'\subset V^*$ of dimension
$[n/2]-i$. Then $c(ab)^i$ is the class of
the (closed) subvariety $L_i\subset H_1$ of pairs of lines: one
line in $U$, the other in $U'$.
\end{rem}

\subsection{$I=\{k\}$}

Now we start to study the case of $I=\{k\}$ where $k$ satisfies $1\leq k\leq [n/2]$.
Although the ring $\CH(H_k)$ can be easily described by generators and
relations and $\sigma$ can be easily described in terms of the generators,
we do not know an easy way to understand $\CH(H^k)^\sigma$.

We write $\cT_k$ for the vector bundle on $H_k$ whose fiber over a point $(U,U')$ is
$U\oplus U'$ (in particular, $\cT_1=\cT$).

We consider the natural projections $\pi_1\colon H_{\{1,k\}}\to H_1$ and
$\pi_k\colon H_{\{1,k\}}\to H_k$.

\begin{lemma}[{cf. \cite[Proposition 2.1]{Vishik-u-invariant}}]
\label{1.6}
For any integer $i$ one has
$$
c_i(-\cT_k)=(\pi_k)_*\pi_1^*c_{i+2(k-1)}(-\cT_1).
$$
\end{lemma}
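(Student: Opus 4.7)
The plan is to exploit the projective-bundle structure of $\pi_k$. The variety $H_{\{1,k\}}$ parametrizes configurations $(\ell\subset U,\,\ell'\subset U')$ with $U\cdot U'=0$; since orthogonality of $\ell$ with $\ell'$ follows automatically from $U\cdot U'=0$, the projection $\pi_k$ identifies $H_{\{1,k\}}$ with the fiber product $\PS(\mathcal{U})\times_{H_k}\PS(\mathcal{U}')$, where $\mathcal{U},\mathcal{U}'$ denote the two tautological rank-$k$ subbundles on $H_k$ that satisfy $\cT_k=\mathcal{U}\oplus\mathcal{U}'$. Under this identification, $\pi_1^*\mathcal{A}$ and $\pi_1^*\mathcal{B}$ are the tautological line subbundles of the two factors, so the classes $a,b\in\CH^1(H_{\{1,k\}})$ coincide with the hyperplane classes pulled back from $\PS(\mathcal{U})$ and $\PS(\mathcal{U}')$.

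I would then expand $c_\bullet(-\cT_1)=c_\bullet(-\mathcal{A})\cdot c_\bullet(-\mathcal{B})$ on $H_1$ to write
$$
\pi_1^*c_{i+2(k-1)}(-\cT_1)=\sum_{p+q=i+2(k-1)}a^p b^q,
$$
and compute the push-forward of each monomial separately. By proper base change applied to the cartesian square defining the fiber product, combined with the projection formula, one obtains
$$
(\pi_k)_*(a^p b^q)=p_*(a^p)\cdot p'_*(b^q),
$$
where $p\colon\PS(\mathcal{U})\to H_k$ and $p'\colon\PS(\mathcal{U}')\to H_k$ are the structure morphisms of the two projective bundles.

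The standard projective-bundle push-forward formula then gives $p_*(a^p)=s_{p-k+1}(\mathcal{U})$ and $p'_*(b^q)=s_{q-k+1}(\mathcal{U}')$, where $s$ denotes the Segre class (vanishing in negative degrees). Re-indexing with $p':=p-(k-1)$ and $q':=q-(k-1)$, the sum collapses to
$$
(\pi_k)_*\pi_1^*c_{i+2(k-1)}(-\cT_1)=\sum_{p'+q'=i}s_{p'}(\mathcal{U})\cdot s_{q'}(\mathcal{U}'),
$$
which is the degree-$i$ component of $s_\bullet(\mathcal{U})\cdot s_\bullet(\mathcal{U}')=c_\bullet(-\mathcal{U})\cdot c_\bullet(-\mathcal{U}')=c_\bullet(-\cT_k)$, hence equals $c_i(-\cT_k)$ as required.

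The only real obstacle is bookkeeping: one must reconcile the paper's sign convention (where $a^j=c_j(-\mathcal{A})$, so $a$ itself is the hyperplane class with no extra sign) simultaneously with the Segre-class push-forward formula and with the identity $s_\bullet(E)=c_\bullet(-E)$. Once those conventions are aligned, every step above is routine.
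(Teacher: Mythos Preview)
Your proof is correct and follows essentially the same approach as the paper's: both identify $\pi_k\colon H_{\{1,k\}}\to H_k$ with the fiber product $\PS(\mathcal{U})\times_{H_k}\PS(\mathcal{U}')$, identify $\pi_1^*\cA,\pi_1^*\cB$ with the tautological line bundles $\cO_1(-1),\cO_2(-1)$, and then apply the projective-bundle push-forward formula. The paper packages the last step as the identity $c_i(-\cE)=\pi_*c_{i+k-1}(-\cO(-1))$ applied to each factor, while you unpack the same identity explicitly via Segre classes; the content is identical.
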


\begin{proof}
For any smooth scheme $X$ with a rank $k$ vector bundle $\cE$ one has
$$
c_i(-\cE)=\pi_*c_{i+k-1}(-\cO(-1)),
$$
where $\pi$ is the morphism
$\PS(\cE)\to X$ and $\cO(-1)$ is the tautological (line) bundle on $\PS(\cE)$.
If now $\cE_1,\cE_2$ are two rank $k$ vector bundles on $X$ and
$\pi$ is the morphism $\PS(\cE_1)\times_X\PS(\cE_2)\to X$, we get that
$$
c_i(-(\cE_1\oplus\cE_2))=\pi_*c_{i+2(k-1)}\big(-(\cO_1(-1)\oplus\cO_2(-1))\big).
$$
In particular, taking $\pi=\pi_k$, we see that
$$
c_i(-\cT_k)=(\pi_k)_*c_{i+2(k-1)}\big(-(\cO_1(-1)\oplus\cO_2(-1))\big).
$$
Since $\cO_1(-1)\simeq\pi_1^*\cA$ and $\cO_2(-1)\simeq\pi_1^*\cB$, we are done.
\end{proof}

\begin{cor}
The $\sigma$-invariant elements
$$
c_{n-2k+1}(-\cT_k),c_{n-2k+2}(-\cT_k),\dots,c_{2n-2k-1}(-\cT_k) \in\CH(H_k)^\sigma
$$
are divisible by $2$.
\qed
\end{cor}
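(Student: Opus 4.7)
The plan is to reduce the statement directly to the case $I=\{1\}$, where the divisibility by $2$ was established just above Lemma~\ref{1}, using the pushforward formula of Lemma~\ref{1.6}.

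First I would dispose of the $\sigma$-invariance, which is essentially free: by its very definition as $U \oplus U'$, the bundle $\cT_k$ is carried to (a bundle isomorphic to) itself by the switch involution, so every Chern class $c_i(-\cT_k)$ automatically lies in $\CH(H_k)^\sigma$. The only content of the corollary is therefore the divisibility by $2$.

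For the divisibility, the formula
$$
c_i(-\cT_k) = (\pi_k)_*\pi_1^* c_{i+2(k-1)}(-\cT_1)
$$
of Lemma~\ref{1.6} reduces the claim to divisibility of $c_j(-\cT_1)$ with $j := i + 2(k-1)$. The prescribed range $n-2k+1 \leq i \leq 2n-2k-1$ corresponds exactly to $n-1 \leq j \leq 2n-3$, and for every such $j$ the explicit calculation carried out just above Lemma~\ref{1} exhibits $c_j(-\cT_1)$ as $2$ times an element of $\CH(H_1)$. Because $\pi_1^*$ and $(\pi_k)_*$ are $\Z$-linear, this divisibility propagates to $c_i(-\cT_k)$.

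I do not anticipate any genuine obstacle: once Lemma~\ref{1.6} is in hand, the corollary is a mechanical index shift. The only thing worth checking is that the stated upper bound $2n-2k-1$ matches the effective upper bound $j \leq 2n-3 = \dim H_1$ on the $H_1$ side; this is precisely what the shift $j=i+2(k-1)$ gives, and for larger $j$ the class $c_j(-\cT_1)$ vanishes, so divisibility would be vacuous and nothing larger than $i = 2n-2k-1$ is claimed.
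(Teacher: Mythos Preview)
Your argument is correct and is precisely the intended one: the paper marks the corollary with a bare \qed\ because Lemma~\ref{1.6} plus the index shift $j=i+2(k-1)$ reduces the claim immediately to the divisibility of $c_{n-1}(-\cT_1),\dots,c_{2n-3}(-\cT_1)$ established just before Lemma~\ref{1}. Your extra remarks on the $\sigma$-invariance and on the matching of the upper bounds are accurate and make explicit what the paper leaves to the reader.
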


We consider the projections
$\pi_k\colon H_{\{k,k+1\}}\to H_k$ and $\pi_{k+1}\colon H_{\{k,k+1\}}\to H_{k+1}$.
The vector bundle $\pi_k^*\cT_k$ is a subbundle of the vector bundle $\pi_{k+1}^*\cT_{k+1}$
(the quotient is a direct sum of two line bundles), and
we write $\alpha\in\CH^2(H_{\{k,k+1\}})$ for
$c_2(\pi_{k+1}^*\cT_{k+1}/\pi_k^*\cT_k)$.

\begin{lemma}[{cf. \cite[Lemma 2.6]{Vishik-u-invariant}}]
\label{1.8}
For $i\in\{0,1,\dots,n-2k\}$ one has
$$
\pi_k^*c_i(-\cT_k)\equiv\pi_{k+1}^*c_i(-\cT_{k+1})+\alpha\cdot c_{i-2}(-\cT_{k+1})\pmod{1+\sigma}.
$$
For
$i\geq n-2k+1$
one has
$$
\pi_k^*c_i(-\cT_k)/2\equiv\pi_{k+1}^*c_i(-\cT_{k+1})/2+\alpha\cdot c_{i-2}(-\cT_{k+1})/2\pmod{1+\sigma}.
$$
\end{lemma}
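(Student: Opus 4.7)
The plan is to adapt Vishik's approach from the orthogonal setting. I would start from the canonical short exact sequence of vector bundles on $H_{\{k,k+1\}}$,
$$
0\to\pi_k^*\cT_k\to\pi_{k+1}^*\cT_{k+1}\to\mathcal{Q}\to 0,
$$
whose quotient $\mathcal{Q}$ splits as a direct sum $L_1\oplus L_2$ of two line bundles with respective fibers $U_{k+1}/U_k$ and $U'_{k+1}/U'_k$ over a point $\bigl((U_k,U_{k+1}),(U'_k,U'_{k+1})\bigr)$. As the switch involution $\sigma$ exchanges the two flags, it also exchanges $L_1$ and $L_2$; writing $\ell_i:=c_1(L_i)$ one has $\sigma(\ell_1)=\ell_2$, so $\ell_1+\ell_2=(1+\sigma)(\ell_1)$ lies in the norm ideal, while $\alpha=c_2(\mathcal{Q})=\ell_1\ell_2$ by the splitting principle. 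Hence
$$
c_\bullet(\mathcal{Q})=1+(\ell_1+\ell_2)+\alpha\equiv 1+\alpha\pmod{1+\sigma}.
$$

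Next I would apply Whitney's formula to the exact sequence, obtaining the \emph{equality} $c_\bullet(-\pi_k^*\cT_k)=c_\bullet(-\pi_{k+1}^*\cT_{k+1})\cdot c_\bullet(\mathcal{Q})$. The bundle $\pi_{k+1}^*\cT_{k+1}$ is itself $\sigma$-invariant (its fiber $U_{k+1}\oplus U'_{k+1}$ is canonically sent to $U'_{k+1}\oplus U_{k+1}$), so $c_\bullet(-\pi_{k+1}^*\cT_{k+1})$ is $\sigma$-invariant, and multiplication by a $\sigma$-invariant element preserves the norm ideal. Reducing modulo $(1+\sigma)$ then yields
$$
c_\bullet(-\pi_k^*\cT_k)\equiv c_\bullet(-\pi_{k+1}^*\cT_{k+1})\cdot(1+\alpha)\pmod{1+\sigma},
$$
and reading off the codimension $i$ component produces the first asserted congruence.

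For the second congruence I would divide the underlying codimension $i$ \emph{equality}
$$
c_i(-\pi_k^*\cT_k)-c_i(-\pi_{k+1}^*\cT_{k+1})-\alpha\cdot c_{i-2}(-\pi_{k+1}^*\cT_{k+1})=(1+\sigma)\bigl(c_{i-1}(-\pi_{k+1}^*\cT_{k+1})\cdot\ell_1\bigr)
$$
by $2$. For $i\geq n-2k+1$ the preceding Corollary applied to $\cT_k$ and to $\cT_{k+1}$ shows that every Chern class appearing here is divisible by $2$; the critical point is that the divisibility range for $\cT_{k+1}$ starts at $n-2k-1\le i-2$, so both $c_{i-1}(-\cT_{k+1})$ and $c_{i-2}(-\cT_{k+1})$ lie in the range. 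Since $H_{\{k,k+1\}}$ is built as an iterated grassmannian bundle over $\Spec K$, its Chow group is torsion-free and the division by $2$ is unambiguous, yielding the second congruence. The main obstacle will be precisely this bookkeeping of divisions by $2$ across the norm-ideal relation, but the range in the Corollary together with the torsion-freeness of $\CH(H_{\{k,k+1\}})$ resolve it cleanly.
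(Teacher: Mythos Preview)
Your argument is correct and is genuinely different from the paper's. The paper does not use the Whitney formula on the exact sequence $0\to\pi_k^*\cT_k\to\pi_{k+1}^*\cT_{k+1}\to L_1\oplus L_2\to 0$ at all. Instead, following Vishik's orthogonal prototype, it expresses $c_i(-\cT_k)$ as a push--pull from $H_1$ via Lemma~\ref{1.6}, builds a large commutative diagram relating $H_1,H_k,H_{k+1},H_{\{1,k\}},H_{\{1,k+1\}},H_{\{k,k+1\}},H_{\{1,k,k+1\}}$ and the fiber product $H=H_{\{1,k+1\}}\times_{H_{k+1}}H_{\{k,k+1\}}$, and then computes the class $[H_{\{1,k,k+1\}}]\in\CH^2(H)$ modulo $1+\sigma$ as $9^*4^*(ab)+10^*(\alpha)$. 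A diagram chase combined with the relation $ab\cdot c_j(-\cT_1)\equiv c_{j+2}(-\cT_1)\pmod{1+\sigma}$ then yields the two congruences.

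Your route is shorter and more transparent: the whole lemma reduces to the single observation $c_\bullet(\mathcal{Q})\equiv 1+\alpha\pmod{1+\sigma}$ plus the bookkeeping of halves, which you handle correctly (the Corollary gives divisibility of $c_j(-\cT_{k+1})$ from $j=n-2k-1$ onward, covering $c_i$, $c_{i-1}$, and $c_{i-2}$ for $i\geq n-2k+1$, and torsion-freeness of $\CH(H_{\{k,k+1\}})$ makes the halves unambiguous and preserves $\sigma$-invariance). The paper's approach, by contrast, keeps everything phrased in terms of the push--pull correspondences from $H_1$ that are used throughout the section; this is closer to Vishik's original treatment and makes the parallel with the orthogonal case explicit, at the cost of a heavier diagrammatic argument.
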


\begin{proof}
We play with the following commutative diagram
\begin{equation*}
\divide\dgARROWLENGTH by2
\begin{diagram}
\node{}\node{H_{\{1,k\}}}\arrow{sw,t}{1}
\arrow{se,t}{3}\node{}\\
\node{H_1}\node{H_{\{1,k,k+1\}}}\arrow{n,l}{2}\arrow{sw,l}{5}
\arrow{s,l,J}{6}\arrow{se,l}{7}
\node{H_k}\\
\node{H_{\{1,k+1\}}}\arrow{se,r}{11}\arrow{n,l}{4}
\node{H}\arrow{w,t}{9}\arrow{e,t}{10}
\node{H_{\{k,k+1\}}}\arrow{sw,r}{12}\arrow{n,r}{8}\\
\node{}\node{H_{k+1}}
\node{}
\end{diagram}
\end{equation*}
where $H$ is defined as the fiber product of $H_{\{1,k+1\}}$ and
$H_{\{k,k+1\}}$ over $H_{k+1}$.
The variety $H_{\{1,k,k+1\}}$ is naturally a closed subvariety (of
codimension $2$) in $H$ and 6 is the closed imbedding.
Note that $\pi_k=8$ and $\pi_{k+1}=12$.
By Lemma \ref{1.6}, the elements $c_i(-\cT_k)$ and $c_i(-\cT_k)/2$ are
$3_*1^*(x)$ for certain $x\in\CH(H_1)^\sigma$.
Let us compute $y:=8^*3_*1^*(x)$ for an arbitrary $x\in\CH(H_1)^\sigma$.

The square 3-8-7-2 is transversal cartesian.
Therefore $8^*3_*=7_*2^*$.
By commutativity of the square 1-2-5-4, $2^*1^*=5^*4^*$ so that
$y=7_*5^*4^*(x)$.
By commutativity of the triangles 5-6-9 and 6-7-10,
$y=10_*6_*6^*9^*4^*(x)=10_*[H_{\{1,k,k+1\}}]\cdot9^*4^*(x)$.
The class $[H_{\{1,k,k+1\}}]\in\CH^2(H)$ is computed modulo $1+\sigma$
as $9^*4^*(ab)+10^*(\alpha)$.
It follows that $y\equiv10_*9^*4^*(abx)+\alpha\cdot10_*9^*4^*(x)$.
Since the square 9-10-12-11 is transversal cartesian, $10_*9^*=12^*11_*$,
so that we finally get
$y\equiv12^*11_*4^*(abx)+\alpha\cdot12^*11_*4^*(x)\pmod{1+\sigma}$.

We get the first (resp. second) desired congruence taking $x=c_{i+2(k-1)}(-\cT_1)$
(resp. $x=c_{i+2(k-1)}(-\cT_1)/2$) by Lemma
\ref{1.6}, because $abc_{i+2(k-1)}(-\cT_1)\equiv c_{i+2(k+1)}(-\cT_1)\pmod{1+\sigma}$
(resp. $abc_{i+2(k-1)}(-\cT_1)/2\equiv c_{i+2(k+1)}(-\cT_1)/2\pmod{1+\sigma}$)
for the corresponding values of $i$ (cf. Remark \ref{complete}).
\end{proof}

\begin{prop}[{cf. \cite[Proposition 2.11]{Vishik-u-invariant}}]
\label{1.9}
The ring $\CH(H_k)^\sigma$  is generated (as a ring) modulo
the ideal $(1+\sigma)\CH(H_k)$ by
the elements
$c_i(-\cT_k)$ with even $i$ satisfying $0\leq i\leq n-2k$ and the elements
$c_i(-\cT_k)/2$ with odd $i$ satisfying $n-2k+1\leq i\leq 2n-2k-1$.
\end{prop}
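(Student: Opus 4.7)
The plan is to prove the proposition by induction on $k$, adapting Vishik's strategy for the orthogonal case \cite[Proposition 2.11]{Vishik-u-invariant} to the unitary setting. The base case $k = 1$ is the content of Lemma \ref{1} together with Remark \ref{complete}: the additive generators described there match the ring generators listed in the proposition for $k = 1$.

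For the inductive step (carried out as a downward induction, with the base case at $k = \lfloor n/2 \rfloor$ checked separately using the simpler structure of $H_k$ for $k$ maximal), I would use the double fibration $H_k \xleftarrow{\pi_k} H_{\{k,k+1\}} \xrightarrow{\pi_{k+1}} H_{k+1}$ together with Lemma \ref{1.8}. The projection $\pi_{k+1}$ realizes $H_{\{k,k+1\}}$ as a $\PS^k \times \PS^k$-bundle over $H_{k+1}$: the fiber over $(U_{k+1}, U'_{k+1})$ parametrizes pairs of lines in $U_{k+1}$ and in $U'_{k+1}$ (orthogonality being automatic). By the projective bundle formula, $\CH(H_{\{k,k+1\}})$ is a free $\pi_{k+1}^* \CH(H_{k+1})$-module with basis $\{h_1^i h_2^j\}_{0 \leq i, j \leq k}$, where $h_1, h_2$ are the tautological hyperplane classes swapped by $\sigma$, and $\alpha = h_1 h_2$. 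A direct check (off-diagonal pairs $\pi_{k+1}^*(\zeta) h_1^i h_2^j + \pi_{k+1}^*(\sigma\zeta) h_1^j h_2^i$ lie in $(1+\sigma)$) shows that modulo the norm ideal the $\sigma$-invariants in $\CH(H_{\{k,k+1\}})$ reduce to $\pi_{k+1}^* \CH(H_{k+1})^\sigma$-linear combinations of the powers $\alpha^i = (h_1 h_2)^i$. Symmetrically, $\pi_k$ is a bundle whose fibers are $H_1$-type varieties on an $(n-2k)$-dimensional space, so $\pi_k^*$ is injective.

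Given $\xi \in \CH(H_k)^\sigma$, I would therefore write
\[
\pi_k^* \xi \equiv \sum_{i} \pi_{k+1}^*(\tilde\eta_i) \, \alpha^i \pmod{(1+\sigma)}
\]
with $\tilde\eta_i \in \CH(H_{k+1})^\sigma$. By the inductive hypothesis applied to $H_{k+1}$, each $\tilde\eta_i$ modulo norms is a polynomial in the listed Chern classes on $H_{k+1}$. Using Lemma \ref{1.8} solved recursively in $i$, each $\pi_{k+1}^* c_j(-\cT_{k+1})$ (and its half in the appropriate range) is congruent modulo $(1+\sigma)$ to $\sum_\ell (-\alpha)^\ell \, \pi_k^* c_{j-2\ell}(-\cT_k)$ (respectively halved). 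Substituting these expressions, $\pi_k^* \xi$ becomes a polynomial in $\pi_k^*$ of the listed generators on $H_k$ together with $\alpha$; by injectivity of $\pi_k^*$ and the uniqueness of the $\{h_1^i h_2^j\}$-decomposition, the $\alpha$-dependence must collapse into a term of the form $\pi_k^*(\text{polynomial in the listed generators})$, giving the desired expression for $\xi$ modulo norms.

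The main obstacle is the delicate bookkeeping across the two regimes of Lemma \ref{1.8} (the ranges $i \leq n-2k$ and $i \geq n-2k+1$, the latter involving halving): the recursive substitution must stay within the specified index ranges so that only the Chern classes appearing in the proposition are actually needed. Here the vanishing results at the end of Lemma \ref{1}, propagated via Lemma \ref{1.6} to vanishings of $c_i(-\cT_k)$ and $c_i(-\cT_k)/2$ outside the listed ranges, are essential for terminating the generator list at the correct indices. Verifying the base case $k = \lfloor n/2 \rfloor$ of the downward induction directly is a further subtle point, since the generator list there collapses almost entirely to the ``halved'' Chern classes.
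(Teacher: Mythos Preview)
Your approach is genuinely different from the paper's, and the crucial ``collapsing'' step is not justified as written.

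The gap is here: after substituting via Lemma~\ref{1.8} you obtain
\[
\pi_k^*\xi \equiv Q\bigl(\pi_k^*c_\bullet(-\cT_k),\,\alpha\bigr)\pmod{1+\sigma},
\]
and you want to conclude that $\xi$ modulo norms lies in the subring generated by the $c_\bullet(-\cT_k)$ alone. For this you appeal to injectivity of $\pi_k^*$ and to ``uniqueness of the $\{h_1^ih_2^j\}$-decomposition'', but that decomposition is with respect to the $\pi_{k+1}$-bundle structure, not the $\pi_k$-bundle structure, and $\alpha$ is not in the image of $\pi_k^*$. To extract information about $\xi$ you would need the $\CH(H_k)^\sigma/(1+\sigma)$-module structure of $\CH(H_{\{k,k+1\}})^\sigma/(1+\sigma)$. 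Here the fibers of $\pi_k$ are $H_1$-type varieties on an $(n-2k)$-dimensional space, so by Remark~\ref{complete} a basis modulo norms involves not only powers of $\alpha$ but also the halved class; you have not established this module structure nor shown that the relevant relations have coefficients in your proposed subring, which is exactly what a rank-comparison argument would require. A further issue is that your base case $k=\lfloor n/2\rfloor$ is not independently available: the paper only obtains the structure of $\CH(Y_{n/2})/(1+\sigma)$ in Section~\ref{Steenrod operations for split unitary grassmannians}, \emph{after} Proposition~\ref{1.9} and via the embedding into the orthogonal grassmannian.

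The paper's proof proceeds quite differently. It first establishes generation for the \emph{full} flag variety $H_{\{1,\dots,k\}}$ (Lemma~\ref{1.10}) by upward induction on $k$, passing to the hermitian picture (Example~\ref{link}) so as to apply a specialization/filtration criterion \cite[Lemma~5.6]{gug} to the fibration $Y_{\{1,\dots,k\}}\to Y_{\{1,\dots,k-1\}}$; the verification of condition~(b) there requires a delicate argument for points whose residue field contains $K$. Only afterwards does it descend from the full flag to $H_k$ (Lemma~\ref{xxx}) along the chain $H_{\{1,\dots,k\}}\to H_{\{2,\dots,k\}}\to\cdots\to H_k$, each step being a genuine $\PS^{l-1}\times\PS^{l-1}$-bundle, so that the module is free with basis $\{(a_lb_l)^i\}$ and a rank comparison finishes. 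Lemma~\ref{1.8} is used only at the initial step $l=1$ of Lemma~\ref{xxx}, to rewrite the generators from Lemma~\ref{1.10} in the desired form.
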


\begin{proof}
For each integer $l$ with $1\leq l\leq k$, we consider the projection
$\pi_l\colon H_{\{1,\dots,k\}}\to H_l$ and the elements
\begin{multline}
\tag{$*$}
\pi_l^*c_i(-\cT_l) \text{ with even $i$ satisfying $0\leq i\leq n-2l$ and }\\
\pi_l^*c_i(-\cT_l)/2 \text{ with odd $i$ satisfying $n-2l+1\leq i\leq 2n-2l-1$.}
\end{multline}

\begin{lemma}[{cf. \cite[Lemma 2.12]{Vishik-u-invariant}}]
\label{1.10}
The ring $\CH(H_{\{1,\dots,k\}})^\sigma$ is generated modulo the ideal
$
(1+\sigma)\CH(H_{\{1,\dots,k\}})
$
by the elements ($*$) (with $l$ running over
$1,\dots,k$).
\end{lemma}

Before we prove Lemma \ref{1.10}, we have to explain the link to hermitian forms:
\begin{example}
\label{link}
Assume that the field $K$ is separable quadratic over some subfield $F\subset K$ and let $h$ be a
$K/F$-hermitian form on $V$.
Let $Y_I$ be the flag variety of totally isotropic subspaces in $V$.
The $K$-variety $(Y_I)_K$ is canonically isomorphic to $H_I$.
For any $k=1,\dots,[n/2]$, the identification $H_k=(Y_k)_K$
transforms $\cT_k$ to the tautological vector bundle on $(Y_k)_K$,
defined over $F$.
The non-trivial automorphism of $K/F$ induces an automorphism of
$\CH(Y_I)_K$ identified with $\sigma$.
The image of the change of field homomorphism $\CH(Y_I)\to\CH(H_I)$ is
contained in the subring $\CH(H_I)^\sigma\subset\CH(H_I)$ of the
$\sigma$-invariant elements.
Moreover, if $h$ is hyperbolic, the change of field homomorphism $\CH(Y_I)\to\CH(H_I)$ is
injective and its image coincides with $\CH(H_I)^\sigma$ so that we have a
canonical identification $\CH(Y_I)=\CH(H_I)^\sigma$;
the ideal $(1+\sigma)\CH(H_I)\subset\CH(H_I)^\sigma$ coincides with the
image of the norm homomorphism $\CH(Y_I)_K\to\CH(Y_I)$.
The statement on hyperbolic $h$ is a consequence of the motivic
decomposition of the motive of a projective homogeneous variety under a
{\em quasi-split} semisimple affine algebraic group obtained in
\cite{MR2110630}.
\end{example}

\begin{proof}[Proof of Lemma \ref{1.10}]
Since the statement does not depend on the base field $K$, we may assume
that $K$ is quadratic separable over some subfield $F$.
Then we fix a hyperbolic $K/F$-hermitian form on $V$ and replace $\CH(H_{\{1,\dots,k\}})^\sigma$ by
$\CH(Y_{\{1,\dots,k\}})$ (see Example \ref{link}).

We do induction on $k$.
The case $k=1$ is Lemma \ref{1}.
To pass from $k-1$ to $k$,
we apply
\cite[Lemma 5.6]{gug},
a variant of \cite[Statement 2.13]{Vishik-u-invariant}.
Let $Y\to X$ be the projection
$Y_{\{1,2,\dots,k\}}\to Y_{\{1,2,\dots,k-1\}}$ and let $B$ be the subgroup of $\CH(Y)$
generated by the norms
and the elements ($*$) with $l=k$.
We have to show that $B=\CH(Y)$ and \cite[Lemma 5.6]{gug}
tells us that it suffices to verify two following conditions:
\begin{itemize}
\item[(a)]
for the generic point $\theta\in X$, the composition $B\hookrightarrow\CH(Y)\onto\CH(Y_\theta)$
is surjective;
\item[(b)]
for any point $x\in X$, at least one of two holds:
\begin{itemize}
\item[(b1)]
the specialization $\CH(Y_\theta)\to\CH(Y_x)$ is
surjective;
\item[(b2)]
for the filtration on $\CH(Y)$ whose $i$th term $\cF^i\CH(Y)$ is the
subgroup generated by the classes of cycles on $Y$ with image in $X$ of
codimension $\geq i$, and for $r:=\codim x$,
the image of
$\CH(Y_x)\to \cF^r\CH(Y)/\cF^{r+1}$ is
in the subgroup of classes of elements of $B\cap\cF^r\CH(Y)$.
\end{itemize}
\end{itemize}

Each fiber of our morphism $Y\to X$ is a hermitian quadric given by a hyperbolic hermitian space
of dimension $n-2(k-1)$.
Let us check that Condition (a) holds.
The restriction of $\pi_k^*\cT_k$ to the generic fiber of the projection is
isomorphic to the direct sum of $\cT_1$ and a trivial vector bundle of rank $2(k-1)$.
Therefore the pull-backs of the elements ($*$) to the generic fiber give
the elements
\begin{multline*}
c_i(-\cT_k) \text{ with even $i$ satisfying $0\leq i\leq n-2k$ and }\\
c_i(-\cT_k)/2 \text{ with odd $i$ satisfying $n-2k+1\leq i\leq 2n-2k-1$.}
\end{multline*}
which generate the group $\CH(Y)$ modulo the norms by Lemma \ref{1}.
Note that
$$
2(n-2(k-1))-3\leq 2n-2k-1.
$$

Now let us check that Condition (b) holds.
Although the specialization homomorphism from the Chow group of the
generic fiber to the Chow group of the fiber over a point $x$ is not
surjective in general,
it is surjective by Lemma \ref{1} if the residue field of $x$ does not contain a subfield
isomorphic to $K$.
We finish the proof by showing that in the opposite case the image of
$\CH(Y_x)$ in the associated graded group of the filtration on $\CH(Y)$ is
in the image of $1+\sigma$.

Let $T$ be the closure of $x$ in $X$.
Let $Y_T=Y\times_XT\hookrightarrow Y$ be the preimage of $T$ under $Y\to
X$.
The image of the homomorphism $\CH(Y_x)\to \cF^r\CH(Y)/\cF^{r+1}\CH(Y)$,
where $r=\codim_X x$, is in the image of the push-forward
$\CH(Y_T)\to\cF^r\CH(Y)/\cF^{r+1}\CH(Y)$.
Since $x$ is the generic point of $T$ and $F(x)=F(T)\supset K$, a
non-empty open subset $U\subset T$ possesses a morphism to $\Spec K$.
Its preimage $Y_U\subset Y_T$ is open and also possesses a morphism to $\Spec
K$.
Therefore $(Y_U)_K\simeq Y_U\coprod Y_U$ and, in particular,
the push-forward $\CH((Y_U)_K)\to\CH(Y_U)$ is surjective.

We play with the following commutative diagram:
$$
\divide\dgARROWLENGTH by3
\begin{diagram}
\node{Y_K}\arrow[2]{e}\node{}\node{Y}\arrow[2]{e}\node{}\node{X}\\
\node{(Y_T)_K}\arrow{n,J}\arrow[2]{e}\node{}\node{Y_T}\arrow[2]{e}\arrow{n,J}\node{}
\node{T}\arrow{n,J}\\
\node{(Y_U)_K}\arrow{n,J}\arrow[2]{e}\node{}\node{Y_U}\arrow{n,J}\arrow[2]{e}\node{}\node{U}
\arrow{n,J}
\end{diagram}
$$

It follows that the image of the push-forward
$\CH((Y_T)_K)\to\CH(Y_T)$ generates $\CH(Y_T)$ modulo the image of
$\CH(Y_T\setminus Y_U)$.
Since the image of $\CH(Y_T\setminus Y_U)\to\CH(Y)$ is in
$\cF^{r+1}\CH(Y)$, it follows that
the image of $\CH(Y_T)$ in the quotient of the filtration on $\CH(Y)$ is
contained in the image of $\cF^r\CH(Y_K)$, that is, in the image of
$1+\sigma$.
\end{proof}

Let $I=[1,\;k]=\{1,2,\dots,k\}$.
For every $i\in I$,
let $\cA_i$ and $\cB_i$ the the tautological vector bundles on $H_i$
(so that $\cT_i=\cA_i\oplus\cB_i$).
We define $a_i,b_i\in\CH^1(H_I)$ as the first Chern classes of the line
bundles $(H_I\to H_i)^*\cA_i/(H_I\to H_{i-1})^*\cA_{i-1}$ and
$(H_I\to H_i)^*\cB_i/(H_I\to H_{i-1})^*\cB_{i-1}$.

For any $l\in I$, we identify $\CH(H_{[l\;,k]})$ with a subring in
$\CH(H_I)$ via the pull-back.
Note that $a_i,b_i\in\CH(H_{[l\;,k]})$ for $i\in[l+1\;,k]$.

By induction on $l\in I$, we prove the following statement;
note that this statement for $l=k$ is the statement of Proposition \ref{1.9}:

\begin{lemma}
\label{xxx}
The ring $\CH(H_{[l,\;k]})^\sigma$ is generated modulo $(1+\sigma)$ by
the elements of Proposition \ref{1.9} and the elements
$\{a_ib_i\}_{i\in[l+1,\;k]}$.
\end{lemma}

\begin{proof}
The induction base $l=1$ follows from Lemma \ref{1.10} and Lemma \ref{1.8}
(the latter showing that the missing generators of Lemma \ref{1.10} are
expressible in terms of the kept generators and the added generators).
Let us do the passage from $l-1$ to $l$.

The projection $H_{[l-1,\;k]}\to H_{[l,\;k]}$ is (canonically isomorphic to)
a product of two rank $l-1$ projective
bundles (given by the dual of the rank $l$ tautological vector bundles $\cA_{l}$ and $\cB_{l}$
on $H_{[l,\;k]}$).
The $\CH(H_{[l,\;k]})$-algebra $\CH(H_{[l-1,\;k]})$ is therefore generated by
the two elements $a_l,b_l$ subject to the two relations
$$
\Sum_{i=0}^{l}c_i(\cA_{l})a_l^{l-i}=0, \;\;
\Sum_{i=0}^{l}c_i(\cB_{l})b_l^{l-i}=0.
$$
In particular, the $\CH(H_{[l,\;k]})$-module $\CH(H_{[l-1,\;k]})$ is free, a basis is given by
the products $a_l^ib_l^j$ with $i,j\in[0,\;l-1]$.

The involution $\sigma$ exchanges $a$ and $b$.
Therefore the module $\CH(H_{[l-1,\;k]})^\sigma/(1+\sigma)$ over the ring
$\CH(H_{[l,\;k]})^\sigma/(1+\sigma)$
is free of rank $l$, a basis is given by
the (classes of the) products $a_l^ib_l^i$ with $i\in[0,\;l-1]$.
In particular, the $\CH(H_{[l,\;k]})^\sigma/(1+\sigma)$-algebra $\CH(H_{[l-1,\;k]})^\sigma/(1+\sigma)$
is generated by $a_lb_l$.
This generator satisfies the following equality in the quotient
$\CH(H_{[l-1,\;k]})^\sigma/(1+\sigma)$:
$$
\Sum_{i=0}^{l}c_{2i}(\cT_{l})(a_lb_l)^{l-2i}=0.
$$
This is the only relation on the generator because its powers up to $l-1$
form a basis.

Now let $C\subset\CH(H_{[l,\;k]})^\sigma/(1+\sigma)$ be the subring
generated by
the elements of Proposition \ref{1.9} and the elements
$\{a_ib_i\}_{i\in[l+1,\;k]}$.
Note that the coefficients of the above relation are in $C$:
they are expressible in terms of $c_i(-\cT_l)$ (which are non-zero modulo
$1+\sigma$ only for $i=0,2,\dots,n-2l$ by Lemmas \ref{1.6} and \ref{1}.
Therefore the subring of $\CH(H_{[l-1,\;k]})^\sigma/(1+\sigma)$ generated
by $C$ and $a_lb_l$ is also a free $C$-module of rank $l$.
On the other hand, this subring coincides with the total ring by the
induction hypothesis and it follows that
$C=\CH(H_{[l,\;k]})^\sigma/(1+\sigma)$.
This proved Lemma \ref{xxx}.
\end{proof}
Proposition \ref{1.9} is proved.
\end{proof}

\begin{rem}[{\bf Geometric description of the generators}]
Proposition \ref{1.9} provides us with generators of the ring
$\CH(Y_k)$ modulo the $K/F-$norms via the identification
$\CH(Y_k)=\CH(H_k)^\sigma$ of Example \ref{link}.
These generators have precisely the same geometric description as the
standard generators of the Chow ring of an orthogonal grassmannian.
Namely, they are obtained via the composition
$(Y_{\{1,k\}}\to Y_k)_*\compose(Y_{\{1,k\}}\to Y_1)^*$
out of the additive generators of $\CH(Y_1)$ modulo the norms.
Moreover, for any odd $i$ satisfying $n-2k+1\leq i\leq 2n-2k-1$, the generator
$c_i(-\cT_k)/2$ is the class of the Schubert subvariety of the subspaces
intersecting non-trivially a fixed totally isotropic subspace in $V$ of
certain $K$-dimension.
This is a consequence of Remark \ref{complete} and  Lemma \ref{1.6}.
\end{rem}

\section
{Steenrod operations for split unitary grassmannians}
\label{Steenrod operations for split unitary grassmannians}

In this section, dimension $n$ of the $K$-vector space $V$ is supposed to
be even.

Let $H=H_k$.
One more tool for study of $\CH(H)$ is given by the morphism
$\inc\colon H\to X$, where $X$ is the variety of totally isotropic
$2k$-planes of the hyperbolic quadratic form $\HH(V)=V\oplus V^*$.
The morphism associates to a point $(U,U')$ of $H$ the point
$U\oplus U'$ of $X$.
This is a closed imbedding by \cite[Corollary 10.4]{MR1758562}.

Note that the image of the pull-back $\inc^*\colon\CH(X)\to\CH(H)$
is contained in $\CH(H)^\sigma$.
Indeed, fixing
a non-degenerated symmetric bilinear form on $V$ giving
an identification of $V$ with $V^*$, we get the exchange
involution on $H$ (inducing $\sigma$ on $\CH(H)$) and an involution on $X$
given by the automorphism $V\oplus V^*=V^*\oplus V$.
The imbedding $H\hookrightarrow X$ commutes with these involutions, and the
involution induced on $\CH(X)$ is the identity because $V$ is of even
dimension.

The power of this tool
is explained by the fact that $\CH(X)$, in contrast to $\CH(H)$, is very well
studied.
An advantage of the variety $X$ is that (in contrast to $H$) it has twisted
forms with closed points of ``high'' degrees.

The meaning of the imbedding $H\hookrightarrow X$ is as follows.
Assume that $V$ is endowed with a $K/F$-hermitian form $h$.
We consider the variety $Y_k$.
Let $X_{2k}$ be the variety of $2k$-planes in the vector $F$-space $V$
totally isotropic with respect to the quadratic form on $V$ given by $h$.
We have a natural closed imbedding $\inc\colon Y_k\hookrightarrow X_{2k}$ which
becomes the above imbedding over $K$.
Choosing a hyperbolic $h$, we get another proof of the fact that the image
of $\CH(X)\to\CH(H)$ is in $\CH(H)^\sigma$: this is so because $\CH(X)=\CH(X_{2k})$ and
$\CH(Y_k)=\CH(H)^\sigma$.

Recall (see \cite{Vishik-u-invariant})
that the ring $\CH(X)$ is generated by certain elements
$w_i\in\CH^i(X)$, $i=0,1,\dots,n-2k$ and $z_i\in\CH^i(X)$,
$i=n-2k,n-2k+1,\dots,2n-2k-1$.
They satisfy $w_i=c_i(-\cT_X)$ for all $i$ and $z_i=c_i(-\cT_X)/2$ for $i\ne
n-2k$, where $\cT_X$ is the tautological vector bundle on $X$.

\begin{lemma}
\label{in*}
The pull-back $\CH(X)\to \CH(H)^\sigma/(1+\sigma)$ is surjective.
The image of each $z_i$ with even $i\ne n-2k$ is $0$.
\end{lemma}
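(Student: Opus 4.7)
The plan is to compute $\inc^*$ explicitly on the standard generators of $\CH(X)$ and then match the result against the generating set for $\CH(H)^\sigma/(1+\sigma)\CH(H)$ given by Proposition \ref{1.9}. The key observation is the identification $\inc^*\cT_X = \cT_k$: by construction of $\inc$ and of the tautological bundles, the fiber of $\cT_X$ over $\inc(U,U') = U\oplus U'$ is $U\oplus U'$, which is exactly the fiber of $\cT_k$ over $(U,U')$. Consequently $\inc^* w_i = c_i(-\cT_k)$ for all $i\in[0,n-2k]$, and $\inc^* z_i = c_i(-\cT_k)/2$ for all $i\in[n-2k+1,\,2n-2k-1]$ (while $\inc^* z_{n-2k}$ is simply some preimage of $c_{n-2k}(-\cT_k)$ under doubling, which plays no role below).

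For the surjectivity claim, Proposition \ref{1.9} tells us that $\CH(H)^\sigma/(1+\sigma)\CH(H)$ is generated as a ring by the elements $c_i(-\cT_k)$ with even $i\in[0,n-2k]$ together with $c_i(-\cT_k)/2$ with odd $i\in[n-2k+1,\,2n-2k-1]$. Each such generator is either $\inc^* w_i$ or $\inc^* z_i$, so $\inc^*$ hits every ring generator of the quotient and is therefore surjective onto it.

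For the vanishing claim, fix an even $i\neq n-2k$ in $[n-2k+1,\,2n-2k-1]$ and set $j:=i+2(k-1)$. Since $n$ is even, so is $j$; and the parity constraint together with $i\geq n-2k+1$ forces $i\geq n-2k+2$, hence $j\geq n\geq n-1$. By Lemma \ref{1}, the class of $c_j(-\cT_1)/2$ in $\CH(H_1)^\sigma$ lies in $(1+\sigma)\CH(H_1)$. Applying Lemma \ref{1.6} in its divided form
\[
c_i(-\cT_k)/2 \;=\; (\pi_k)_* \pi_1^* \bigl(c_{j}(-\cT_1)/2\bigr),
\]
and using that $\pi_1$ and $\pi_k$ are $\sigma$-equivariant (so that $\pi_1^*$ and $(\pi_k)_*$ carry the ideal $(1+\sigma)$ into itself), we conclude that $\inc^* z_i$ vanishes modulo $(1+\sigma)\CH(H_k)$.

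The only step requiring any real care is the third one: one must check the parity-range bookkeeping to guarantee that $j$ lands in the range $j$ even, $j\geq n-1$ where Lemma \ref{1} delivers the vanishing. Everything else is a mechanical translation of $\inc^*\cT_X = \cT_k$ through the generating sets already recorded.
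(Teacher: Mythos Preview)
Your proof is correct and follows essentially the same approach as the paper: both arguments rest on the identification $\inc^*\cT_X=\cT_k$, then appeal to Proposition~\ref{1.9} for surjectivity and to Lemmas~\ref{1.6} and~\ref{1} for the vanishing of $\inc^*z_i$ with even $i\neq n-2k$. The paper's proof is terser (it simply cites these lemmas without spelling out the index computation $j=i+2(k-1)$ or the $\sigma$-equivariance of $(\pi_k)_*\pi_1^*$), but the logic is the same.
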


\begin{rem}
One may show (see \cite{gug}) that the pull-back
$$
\inc^*\colon \CH(X_{2k})\to \CH(Y_k)/(1+\sigma)
$$
is surjective (for any $h$).
Moreover, the push-forward $\inc_*$ induces an injection
$$
\inc_*\colon\CH(Y_k)/(1+\sigma)\to\Ch(X_{2k}),
$$
where $\Ch:=\CH/2$.
It follows that the ring $\CH(Y_k)/(1+\sigma)$ is naturally identified with
$\Ch(X_{2k})$ modulo the kernel of the multiplication by
$[Y_k]\in\Ch(X_{2k})$.
In the case of $k=n/2$ and hyperbolic $h$, the computation of the class $[Y_k]$ given below
together with the computation of $\Ch(X_{2k})$ given in
\cite{EKM},  provides the following presentation of $\CH(Y_{n/2})/(1+\sigma)$
by generators and relations:
generators are $e_i\in\CH^i$, $i=1,3,\dots,n-1$;
relations are $e_i^2=0$ for each $i$.
\end{rem}

\begin{proof}[Proof of Lemma \ref{in*}]
The generators of the ring on the right-hand side given in Proposition \ref{1.9}
come from $\CH(X)$
because the pull-back of the tautological vector bundle $\cT_X$ on $X$ to
$H$ is $\cT_k$ and the Chern classes $c_i(-\cT_X)$ are divisible by $2$
for $i>n-2k$.
This gives the surjectivity.

The image of $z_i$ with even $i\ne n-2k$ is $0$ by Lemmas \ref{1.6} and
\ref{1}.
\end{proof}

\begin{lemma}
\label{[Y]}
The element $[H]\in\CH(X)$ is a square.
\end{lemma}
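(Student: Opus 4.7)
The plan is to realize $H$ inside $X$ as a doubly-degenerate locus built from symmetric data, so that $[H]$ arises naturally as a product of two equal classes in $\CH(X)$. The inclusion $\cT_X\subset\HH(V)_X=V_X\oplus V^*_X$ yields two projections $\varphi\colon\cT_X\to V^*_X$ and $\varphi'\colon\cT_X\to V_X$, and the isotropy of $\cT_X$ under the hyperbolic form identifies $\varphi$ with the transpose of $\varphi'$. The subvariety $H$ is precisely the locus where both $\varphi$ and $\varphi'$ drop rank to $k$.

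Porteous's formula then gives $[D_\varphi]=[D_{\varphi'}]=\Delta_{(n-k)^k}(c(-\cT_X))\in\CH(X)$, a Schur determinant in the Chern classes of $\cT_X$. Their product $\Delta_{(n-k)^k}(c(-\cT_X))^2$ is manifestly a square; the key remaining task is to show that $[H]$ itself is obtained from this product in a way that preserves the square structure. A codimension count exhibits an excess of $k(k+1)$ in the intersection $D_\varphi\cap D_{\varphi'}$ along $H$, so the naive product over-counts and one must invoke either the excess-intersection formula or a resolution (a blow-up along the excess locus, or passage through the partial flag variety $H_{[k,\,n/2]}$) to extract $[H]$ cleanly.

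An alternative, and likely cleaner, route reduces first to the maximal isotropic case $2k=n$, where $H\cong G_{n/2}(V)$ sits inside a spinor component $X^+$ of $X$. There $\Pic(X^+)=\Z$, and $[H]$ admits an explicit expression as a power of the first Chern class of the spinor line bundle, which is visibly a square because its exponent is even -- a consequence of the evenness of $n$. The general case then follows by transporting the square structure through the projections of the flag variety $H_{[k,\,n/2]}$ onto $H_k$ and $H_{n/2}$, paired with the corresponding projections between the orthogonal Grassmannians. The main obstacle throughout is the excess-intersection bookkeeping (or, in the alternative route, the rigorous reduction between different values of $k$): the hypothesis that $n$ is even is used precisely to ensure that the relevant codimension $k(2n-3k-1)$ is even and that the Chern-class computations assemble into a genuine square in $\CH(X)$, not merely in $\Ch(X)=\CH(X)/2$.
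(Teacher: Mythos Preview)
Your degeneracy loci $D_\varphi$ and $D_{\varphi'}$ are exactly the Schubert subvarieties $S=\{W\in X:\dim(W\cap V)\geq k\}$ and $S'=\{W\in X:\dim(W\cap V^*)\geq k\}$ that the paper uses, and you have correctly identified $H=S\cap S'$ set-theoretically. But the invocation of Porteous is where the argument breaks. The expected Porteous codimension of $D_\varphi$ is $(2k-k)(n-k)=k(n-k)$, whereas the actual codimension of $S$ in $X$ is only $\tfrac{1}{2}k(2n-3k-1)$, smaller by $\tfrac{1}{2}k(k+1)$. The map $\varphi$ is not generic---the isotropy of $\cT_X$ forces $\Im\varphi\subset(\Im\varphi')^\perp$---so Porteous does not compute $[D_\varphi]$; your Schur determinant $\Delta_{(n-k)^k}(c(-\cT_X))$ is a class of strictly larger codimension supported on $S$, not $[S]$ itself. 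The excess $k(k+1)$ you detected is therefore not an excess in the intersection $S\cap S'$ at all: it is twice the defect of each individual degeneracy locus relative to its Porteous-expected codimension. Correcting for this via an excess-intersection formula does not obviously preserve any square structure, and your alternative route through $k=n/2$ remains a sketch.

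The paper bypasses all of this in two lines. Since $V$ and $V^*$ are maximal isotropic subspaces of $V\oplus V^*$ lying in the \emph{same} connected family (here is where $n$ even enters: $\dim(V\cap V^*)=0$ has the same parity as $n$), they lie in one orbit of the special orthogonal group, so the Schubert classes $[S]$ and $[S']$ coincide; call the common value $x$. And the intersection $S\cap S'=H$ is in fact \emph{proper}:
\[
\codim_X S+\codim_X S'=\tfrac{1}{2}k(2n-3k-1)+\tfrac{1}{2}k(2n-3k-1)=k(2n-3k-1)=\codim_X H,
\]
so $[H]=[S]\cdot[S']=x^2$ directly, with no excess correction needed.
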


\begin{proof}
Let $x\in\CH(X)$ be the class of the Schubert subvariety $S\subset X$ of the
subspaces $U\subset V\oplus V^*$ satisfying $\dim U\cap V\geq k$.
We claim that $[H]=x^2$.
Indeed, $x$ can be also represented by the Schubert subvariety $S'\subset X$
of the
subspaces $U\subset V\oplus V^*$ satisfying $\dim U\cap V^*\geq k$.
Since $S\cap S'=H$ and $\codim_XH=\codim_XS+\codim_XS'$, $[H]=[S]\cdot[S']$
by \cite[Corollary 57.22]{EKM}.
\end{proof}

We now pass to the modulo $2$ Chow group $\Ch(X)=\CH(X)/2$ and
we use the notion of {\em level} for elements of $\Ch(X)$ introduced in
\cite{oddisotro-tignol}.
Namely, an element of $\Ch(X)$ is of level $l$ if it can be written as a
polynomial in the generators of the $z$-degree $\leq l$
(we use the same notation $w_i$, $z_i$ for the classes of the integral generators).
We recall that (see \cite[proof of Proposition 12]{oddisotro-tignol}) by the
formula of \cite[Proposition 2.9]{Vishik-u-invariant} the cohomological
Steenrod operation preserves the level.
In particular, the squaring preserves the level.

We also recall that the generators satisfy the relation
$$
z_i^2=z_ic_i(-\cT_X)-z_{i+1}c_{i-1}(-\cT_X)+z_{i+2}c_{i-2}(-\cT_X)-\dots
$$
which shows that any element of $\Ch(X)$ can be written as a polynomial in
the generators of $z_i$-degree $\leq1$ for each $i$.
A polynomial satisfying this restriction is called {\em standard}.

\begin{cor}
\label{2.4}
The element $[H]\in\Ch(X)$ is of level $k$.
\end{cor}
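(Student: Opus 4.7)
The plan is to combine Lemma \ref{[Y]} with the fact, recalled just above, that the cohomological Steenrod operation (and in particular squaring) preserves the level. By Lemma \ref{[Y]} we have $[H]=x^2$ in $\CH(X)$, with $x=[S]$ the class of the Schubert subvariety $S\subset X$ of isotropic $2k$-planes $U$ satisfying $\dim(U\cap V)\geq k$. This equality descends to $\Ch(X)=\CH(X)/2$, and since squaring preserves level, it suffices to show that $x$ itself has level $\leq k$.

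The class $x=[S]$ is a Schubert class in the orthogonal Grassmannian $X=\mathrm{OG}(2k,\,V\oplus V^*)$, corresponding to a single Schubert condition relative to the distinguished flag containing the maximal isotropic subspace $V$. Geometrically, the condition $\dim(U\cap V)\geq k$ packs $k$ independent ``drop'' conditions (one for each dimension of intersection beyond the generic value $0$), so the indexing partition has exactly $k$ parts, and these parts all lie in the ``large-index'' range $\geq n-2k+1$ where the $z$-generators live (as opposed to the small-index range where the $w$-generators live). Using the Giambelli/Pfaffian formula for Schubert classes in non-maximal isotropic Grassmannians, $x$ admits a polynomial expression in the $w_i$ and $z_j$ in which every monomial involves at most $k$ distinct $z$-factors; in particular $x$ has level $\leq k$.

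The main technical step, and the principal obstacle, is precisely this Schubert-calculus identification of $x$: one must verify that the indexing partition for $S$ has exactly $k$ large parts, and that the resulting Giambelli/Pfaffian expansion of $[S]$ honors the predicted $z$-degree bound. Once this is in place, the equality $[H]=x^2$ together with level-preservation under squaring gives level$([H])\leq \text{level}(x)\leq k$, which is the assertion of the corollary.
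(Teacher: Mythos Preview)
Your opening move---reducing to the level of $x$ via $[H]=x^2$ and level-preservation under squaring---matches the paper exactly. But from there you take an unnecessarily hard road, and you yourself flag that the Schubert-calculus step (identifying the partition of $S$ and checking the Giambelli/Pfaffian expansion respects the $z$-degree bound) is left as an obstacle. As written, the argument is a plausible strategy rather than a proof.

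The paper bypasses all of this with a pure dimension count, and this is the idea you are missing. Recall that after passing to standard form, every $z_i$ appears with exponent at most $1$ in each monomial, and the $z$-generators live in codimensions $n-2k,\,n-2k+1,\,\dots,\,2n-2k-1$. Hence a standard monomial of $z$-degree $\geq k+1$ has codimension at least
\[
(n-2k)+(n-2k+1)+\dots+(n-k).
\]
On the other hand,
\[
\codim x=\tfrac12(\dim X-\dim H)=\tfrac12\bigl(k(4n-6k-1)-k(2n-3k)\bigr)=\frac{k(2n-3k-1)}{2}=(n-2k)+(n-2k+1)+\dots+(n-k-1),
\]
which is strictly smaller (by $n-k$). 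Therefore \emph{every} homogeneous element of this codimension---in particular $x$, regardless of which Schubert class it is---has level $\leq k$. No Schubert calculus is needed: the bound on the level is forced by codimension alone.
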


\begin{proof}
Since squaring does not affect the level, it suffices to show that the
level of a homogeneous element $x$ with $x^2=[H]$ is $k$.
The codimension of $x$ is equal to
\begin{multline*}
(\dim X-\dim H)/2=\big(k(4n-6k-1)-k(2n-3k)\big)/2=
(k/2)(2n-3k-1)=\\
(n-2k)+(n-2k+1)+\dots+(n-k-1),
\end{multline*}
and the minimal codimension of an element which is not of level $k$ is
this number plus $n-k$.
\end{proof}

\begin{thm}[{cf. \cite[Proposition 12]{oddisotro-tignol}}]
\label{main Steen}
Let $F$ be a field of characteristic $\ne2$,
$K/F$ a quadratic field extension, $V$ a vector space
over $K$ of even positive dimension $n$, $h$ a $K/F$-hermitian form on
$V$, $k$ an integer satisfying $1\leq k\leq n/2$,
$Y$ the variety of totally isotropic $k$-planes in $V$.
Then for any $i>k(n-2k)$, one has $\deg\Steen\Ch_i(Y_k)=0$,
where $\Steen$ is the cohomological Steenrod operation and $\deg$ is the degree homomorphism on the
modulo $2$ Chow groups.
\end{thm}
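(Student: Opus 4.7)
The plan is to descend to the ambient split orthogonal grassmannian $X$ via the closed embedding $\inc\colon H_k\hookrightarrow X$ and to deduce the vanishing from the level filtration on $\Ch(X)$.

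First I would extend scalars from $F$ to $K$: for a closed point $y\in Y_k$ with residue field $L$, the base change $y\otimes_F K=\Spec(L\otimes_F K)$ has degree $[L:F]$ over $K$, so $\deg_F=\deg_K\circ\res_{K/F}$. As $\Steen$ commutes with flat pull-back, it suffices to prove $\deg_K\Steen^i\alpha_K=0$ for every $\alpha_K\in\Ch_i(H_k)$ coming from some $\alpha\in\Ch_i(Y_k)$; such $\alpha_K$ is $\sigma$-invariant by Example \ref{link}. Since $\sigma$ is induced by an automorphism of $H_k$ commuting with $\Steen$ and preserving $\deg$, the subgroup $(1+\sigma)\Ch(H_k)$ is killed by $\deg\circ\Steen^i$; I may therefore replace $\alpha_K$ by any lift of its class in $\Ch_i(H_k)^\sigma/(1+\sigma)\Ch(H_k)$.

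By Lemma \ref{in*} combined with Proposition \ref{1.9}, I would lift this class to some $\beta\in\Ch(X)$ expressible as a polynomial in only the $w_i$ with even $i\in[0,n-2k]$ and the $z_i$ with odd $i\in[n-2k+1,2n-2k-1]$ (the generators whose images generate $\Ch(H_k)^\sigma/(1+\sigma)$). Since $\inc^*$ preserves codimension, $\codim_X\beta=k(2n-3k)-i<k(n-k)$ by the hypothesis $i>k(n-2k)$. The $k$ smallest codimensions among the allowed odd $z$-generators are $n-2k+1,n-2k+3,\ldots,n-1$ and sum to $\sum_{j=0}^{k-1}(n-2k+1+2j)=k(n-k)$, so every monomial in the restricted generators of $z$-degree $k$ has codimension at least $k(n-k)$. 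Hence $\beta$ has level at most $k-1$.

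The projection formula now gives $\deg_K\Steen^i\alpha_K=\deg_{H_k}\inc^*\Steen^i\beta=\deg_X([H_k]\cdot\Steen^i\beta)$. By Lemma \ref{[Y]} and Corollary \ref{2.4}, $[H_k]=x^2$ in $\Ch(X)$ with $x$ of level $k$, so $[H_k]$ has level at most $k$; and since the total Steenrod preserves level by the formula of \cite[Proposition 2.9]{Vishik-u-invariant}, $\Steen^i\beta$ has level at most $k-1$. Therefore $[H_k]\cdot\Steen^i\beta$ has level at most $2k-1$, and the key input from \cite[Proposition 12]{oddisotro-tignol}---that in $\Ch^{\dim X}(X)$ every class of level strictly less than $2k$ has zero degree---yields $\deg_X([H_k]\cdot\Steen^i\beta)=0$, as required. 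The main obstacle I anticipate is the careful bookkeeping of levels after restricting to the image of $\inc^*$: Proposition \ref{1.9} is essential for isolating only the odd $z_i$ starting at $n-2k+1$, which gives the tight codimension bound $k(n-k)$ rather than the weaker bound $k(n-2k)+k(k-1)/2$ one would obtain from using all $z$-generators of $\Ch(X)$; combining this restriction with the squareness $[H_k]=x^2$ of level $k$ is what brings the product below the level-$2k$ threshold borrowed from the orthogonal precedent.
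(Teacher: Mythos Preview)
Your approach is essentially the paper's: pass to $H_k$, work modulo $(1+\sigma)$, lift through $\inc^*$ to the split orthogonal grassmannian $X$, and use the level filtration together with Corollary~\ref{2.4} to bound the level of $[H_k]\cdot\Steen\beta$ by $2k-1$, where the degree vanishes by the input from \cite{oddisotro-tignol}. The paper runs the same argument by contradiction on a single standard monomial, while you argue directly with a polynomial lift $\beta$; the content is the same.

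There is, however, a gap in your level bound. Your assertion ``every monomial in the restricted generators of $z$-degree $k$ has codimension at least $k(n-k)$'' is only valid for \emph{standard} monomials, i.e.\ products of \emph{distinct} odd-index $z_i$'s. A non-standard monomial such as $z_{n-2k+1}^k$ has codimension $k(n-2k+1)<k(n-k)$, so the codimension bound alone does not force the $z$-degree of an arbitrary polynomial lift below $k$, and hence does not yet give $\operatorname{level}(\beta)\le k-1$. The paper closes this gap by writing any preimage as a sum of standard monomials in the \emph{full} generating set of $\Ch(X)$ and then invoking the second half of Lemma~\ref{in*}: any standard monomial containing a $z_i$ of even index $i\ne n-2k$ maps to $0$ in $\Ch(H_k)^\sigma/(1+\sigma)$ (and $z_{n-2k}$ can be replaced by a polynomial in the $w_i$), so only standard monomials in the odd $z_i$'s contribute, and for those your codimension count is legitimate. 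Alternatively you can justify a standard lift directly by checking that $\inc^*(z_i)^2=0$ in the quotient for each odd $i$: from $z_i^2=\sum_{j\ge0}z_{i+j}w_{i-j}$ in $\Ch(X)$, every term has either $i+j$ even (killed by Lemma~\ref{in*}) or $i-j$ odd, in which case $\inc^*(w_{i-j})=c_{i-j}(-\cT_k)\equiv 0\pmod{1+\sigma}$ by Lemmas~\ref{1.6} and~\ref{1}. With either fix, the rest of your argument goes through exactly as in the paper.
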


\begin{proof}
Assume that $\deg\Steen\Ch^j(Y)\ne0$ for some $j$.
Then $\deg\Steen\Ch^j(H)^\sigma\ne0$.
Since $\Steen$ commutes with $\sigma$, $\Steen$ is trivial on
$(1+\sigma)$.
Therefore $\deg\Steen\Ch^j(H)^\sigma/(1+\sigma)\ne0$.
It follows by Lemma \ref{in*} that $\deg\inc^*\Steen\Ch^j(X)\ne0$,
or, equivalently, $\deg\Steen\inc^*\Ch^j(X)\ne0$.
Let $y\in\Ch^j(X)$ be a standard monomial in the generators with $\deg\Steen\inc^*(y)\ne0$.
Since $\inc^*(y)\ne0$, the monomial $y$ does not contain any
$z_i$ with even $i\ne n=2k$ by the second half of Lemma \ref{in*}.
We may also assume that $y$ does not contain $z_{n-2k}$.
Indeed, $\inc^*(z_{n-2k})$ is a polynomial in the generators of $\Ch(H)^\sigma/(1+\sigma)$
of codimension $\leq n-2k$.
In particular, $\inc^*(z_{n-2k})$ is a polynomial in $c_i(-\cT_k)$ with $i\leq
n-2k$.
Let $P\in\Ch^{n-2k}(X)$ be the same polynomial in $c_i(-\cT_X)=w_i$.
Then $\inc^*(P)=\inc^*(z_{n-2k})$ and we may replace $z_{n-2k}$ by $P$ in
$y$ without changing $\inc^*(y)$.

We have
$$
0\ne\deg\inc^*\Steen(y)=\deg\inc_*\inc^*\Steen(y)=
\deg\Steen([H]\cdot y).
$$
Since degree of any level $2k-1$ element is $0$,
\cite[proof of Proposition 12]{oddisotro-tignol}, the element
$\Steen([H]y)$ is not of level $2k-1$.
Since the Steenrod operation preserves the level,
the product $[H]\cdot y$ is not of level $2k-1$.
Since $[H]$ is of level $k$ by Corollary \ref{2.4}, $y$ is not of level
$k-1$.
The smallest possible codimension of a monomial of level not $k-1$
without $z$-generators of even codimension is the sum of $k$ summands
$$
(n-2k+1)+(n-2k+3)+\dots+(n-1)=k(n-2k+1)+k(k-1)=k(n-k).
$$
It follows that $j<k(n-k)$.
Since $\dim Y-k(n-k)=k(n-2k)$, we are done.
\end{proof}

\section
{Some ranks of some motives}
\label{Some ranks of some motives}

Let $K/F$ be a separable quadratic field extension.
Let $M$ be a motive over $F$ with coefficients in $\F_2$.
We assume that there
exists a field extension $F'/F$ linearly disjoint with an algebraic closure of $F$
such that the motive $M_{F'}$ decomposes in a sum of shifts of the motives
of $\Spec F'$ and $\Spec K'$, where $K'$ is the field $K\otimes_FF'$.
Note that the number of $F'$ and the number of $K'$ appearing in the
decomposition do not depend on the choice of $F'$:
if $F''$ is another field like that, the Krull-Schmidt principle
\cite{MR2110630} over the
field of fractions of $F'\otimes_F F''$ gives the equalities.
Here we use an easy version of the Krull-Schimdt principle for motives
with finite coefficients of quasi-homogeneous varieties proved also in
\cite[Corollary 2.2]{outer}.

The number of $F'$ in the above decomposition is the {\em $F$-rank} $\rk_F$ of
$M$, the number of $K'$ is the {\em $K$-rank} $\rk_K$ of
$M$.
The usual rank $\rk M$ is also defined for such $M$ and is equal to
$\rk_FM+2\rk_KM$.

Recall that there are functors
$$
\tr,\cores\colon\CM(K,\F_2)\to\CM(F,\F_2).
$$
The first one (non-additive and not commuting with the shift, see \cite{MR1809664})
is induced by the Weil transfer.
The second one (additive and commuting with the shift, see \cite{outer}) is induced
by the functor associating to a $K$-variety the same variety considered as
a variety over $F$ via the composition with $\Spec K\to\Spec F$.

Here is an example of computation of ranks.

\begin{lemma}
\label{primer}
Let $M$ be a motive over $K$ isomorphic to a sum of $n$ shifts of the Tate
motive.
Then $\rk_F\tr M=\rk M=n$, $\rk_K\tr M=n(n-1)/2$,
$\rk_F\cores M=0$, and $\rk_K\cores M=n$.
\end{lemma}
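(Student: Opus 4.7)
My plan is to treat $\cores$ and $\tr$ separately, since the former is additive while the latter is subtler.

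For the corestriction, being additive and commuting with Tate shifts, it suffices to compute ranks for a single shifted Tate summand $T=\F_2(d)[2d]$. Its corestriction is a shift of the motive of $\Spec K$ viewed as an $F$-scheme; base-changing to any $F'/F$ linearly disjoint with an algebraic closure of $F$ produces a shift of the motive of $\Spec K'$, contributing $0$ to $\rk_F$ and $1$ to $\rk_K$. Summing over the $n$ Tate summands of $M$ gives $\rk_F\cores M = 0$ and $\rk_K\cores M = n$.

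For the Weil transfer, since $\tr$ is non-additive, I would use the identity
$$
\tr(A\oplus B)\cong \tr(A)\oplus\tr(B)\oplus\cores(A\otimes_K B),
$$
available from \cite{MR1809664} for arbitrary motives $A,B$ over $K$. Iterating on $M=T_1\oplus\cdots\oplus T_n$ yields
$$
\tr M\;\cong\;\bigoplus_{i=1}^n \tr T_i\;\oplus\;\bigoplus_{1\le i<j\le n}\cores(T_i\otimes_K T_j).
$$
Each $\tr T_i$ is a shift of the Tate motive over $F$ (explicitly, $\tr\F_2(d)[2d]\cong\F_2(2d)[4d]$), contributing $1$ apiece to $\rk_F$. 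Each $T_i\otimes_K T_j$ is still a shifted Tate motive over $K$, so by the corestriction step $\cores(T_i\otimes_K T_j)$ contributes $1$ apiece to $\rk_K$. Summing gives $\rk_F\tr M=n$ and $\rk_K\tr M=\binom{n}{2}=n(n-1)/2$, while $\rk M=n$ is immediate from the hypothesis.

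The main obstacle is the non-additivity identity. A self-contained alternative, which handles the unshifted case (to which the general one reduces by commutativity of $\tr$ with a splitting base change), is to compute $R_{K/F}(Y)$ directly for $Y=n\cdot\Spec K$: the functor of points $R_{K/F}(Y)(S)=Y(S\otimes_F K)$ has $n$ $F$-points (from constant maps, since $\Spec K$ is connected) and $n^2$ $K$-points (since $K\otimes_F K\cong K\times K$ has two factors, each independently assignable to one of $n$ components). Writing the resulting finite $F$-scheme as $a\cdot\Spec F\sqcup b\cdot\Spec K$, the equations $a=n$ and $a+2b=n^2$ force $b=n(n-1)/2$, yielding the claimed ranks directly.
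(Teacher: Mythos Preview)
Your main argument is correct and matches the paper's approach: the paper handles $\cores$ by the same one-line observation and for $\tr$ simply cites \cite[Lemma 2.1]{qweil}, whose content is precisely the decomposition you spell out via the identity $\tr(A\oplus B)\cong\tr A\oplus\tr B\oplus\cores(A\otimes_K B)$.

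One caveat on your alternative: the clause ``to which the general one reduces by commutativity of $\tr$ with a splitting base change'' does not, as stated, collapse the shifted case to the unshifted one, since base change does not alter the shifts $d_i$. What actually makes the reduction work is that $(\tr M)_K\cong M\otimes_K M^\sigma=\bigoplus_{i,j}\F_2(d_i+d_j)$ with $\Gal(K/F)$ acting by $(i,j)\leftrightarrow(j,i)$; this action on the index set is independent of the shifts, so the $n$ diagonal summands descend to Tate motives over $F$ and the $\binom{n}{2}$ off-diagonal orbits give shifts of $M(\Spec K)$, regardless of the $d_i$. Your point-count on $R_{K/F}(n\cdot\Spec K)$ is then a clean way to read off those ranks, but the bridge to arbitrary shifts needs that Galois-orbit observation rather than mere compatibility with base change.
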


\begin{proof}
Since $\cores M(\Spec K)=M(\Spec K)$,
the formulas for $\cores$ follow.
The formulas for $\tr$ follow from \cite[Lemma 2.1]{qweil}.
\end{proof}

Let $D$ be a central division $K$-algebra admitting a $K/F$-unitary
involution, and assume that $\deg D=2^n$ for some $n\geq0$.
For an integer $k\in[0,\;n-1]$, let $X_k$ be the Weil transfer with respect
to $K/F$ of the generalized Severi-Brauer variety $\SB(2^k,D)$.
The motive $M(X_k)$ satisfies the above conditions
(one may take as $F'$ the function field of the variety $X_0$)
so that the ranks
$\rk_FM$ and $\rk_KM$ are defined for any summand $M$ of $M(X_k)$.
In particular, the ranks
$\rk_FU(X_k)$ and $\rk_KU(X_k)$ are defined for the {\em upper} (indecomposable)
motive $U(X_k)$.
As in \cite{canondim},
$U(X_k)$ is defined as the summand in the complete motivic
decomposition of $X_k$ satisfying the condition $\Ch^0(U(X_k))\ne0$.

\begin{prop}
\label{ranks}
$v_2(\rk_FU(X_k))=n-k$, $v_2(\rk_KU(X_k))=n-k-1$.
\end{prop}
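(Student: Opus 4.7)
The plan is to exploit that $U(X_k)$ sits inside $M(X_k) = \tr M(Y)$ for $Y := \SB(2^k, D)$, and then use the Weil-transfer rank formulas deduced from Lemma \ref{primer} together with the known rank of the upper motive of $Y$.

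First, identify $(X_k)_K$: combining $(R_{K/F} Z)_K \simeq Z \times Z^\sigma$, the isomorphism $D^\sigma \simeq D^{\op}$ provided by the unitary involution on $D$, and the right/left ideal duality $\SB(m, D^{\op}) \simeq \SB(\deg D - m, D)$ yields $(X_k)_K \simeq Y \times Y'$ with $Y' := \SB(2^n - 2^k, D)$; note $Y' \simeq Y^\sigma$.

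Second, decompose $M(Y) = U(Y) \oplus N$ over $K$ by Krull--Schmidt, and apply the Weil-transfer decomposition for motives from \cite{MR1809664}, which expresses $\tr(A \oplus B)$ in terms of $\tr A$, $\tr B$, and $\cores(A \otimes B)$, to obtain a corresponding decomposition of $M(X_k) = \tr M(Y)$. The summand containing $\Ch^0(X_k)$ is $\tr U(Y)$ (since $\Ch^0$ lies in $U(Y)$ on the $K$-side), so $U(X_k) \subseteq \tr U(Y)$. One then has to argue that $\tr U(Y)$ is indecomposable over $F$, giving $U(X_k) = \tr U(Y)$. Over $K$, $(\tr U(Y))_K \simeq U(Y) \otimes U(Y^\sigma)$ typically decomposes, so the indecomposability over $F$ is a Galois-descent statement---the main obstacle, to be resolved by analyzing the $\sigma$-action on the $K$-decomposition (in particular, on the indecomposable summands of $U(Y) \otimes U(Y')$ and their isomorphism classes under $\sigma$-exchange of the two factors).

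Third, granting $U(X_k) = \tr U(Y)$, base-change Lemma \ref{primer} to a splitting field $F'/F$ linearly disjoint from $K$ to obtain $\rk_F \tr M = \rk M$ and $\rk_K \tr M = \rk M(\rk M - 1)/2$ for any motive $M$ over $K$ admitting such a splitting. Apply this to $M = U(Y)$, using the standard rank formula $\rk U(\SB(2^k, D)) = 2^{n-k}$ for $D$ division of degree $2^n$ (whose $2$-primary valuation matches $v_2\binom{2^n}{2^k} = n - k$ by Kummer's theorem, with odd cofactor equal to one). Plugging $r = 2^{n-k}$ into the Weil-transfer formulas gives $\rk_F U(X_k) = 2^{n-k}$ and $\rk_K U(X_k) = 2^{n-k-1}(2^{n-k} - 1)$, and since $2^{n-k} - 1$ is odd for $k \leq n - 1$, the $2$-adic valuations are $n - k$ and $n - k - 1$ as claimed.
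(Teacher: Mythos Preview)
Your proposal has two gaps, one of which is a concrete error.

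The claimed ``standard rank formula'' $\rk U(\SB(2^k,D))=2^{n-k}$ is false. Take $n=2$, $k=1$: the variety $\SB(2,D)$ is $2$-incompressible of dimension $4$, so $U(\SB(2,D))$ has nonzero pieces in Chow degrees $0$ and $4$; no nontrivial shift of it can then sit inside $M(\SB(2,D))$, and the Tate motive is excluded since the variety has no odd-degree closed point. Hence $U(\SB(2,D))=M(\SB(2,D))$, of rank $\binom{4}{2}=6\ne 2$. The same incompressibility-and-dimension argument gives $U(\SB(2^k,D))=M(\SB(2^k,D))$ for every $k$, so the correct rank is $\binom{2^n}{2^k}$; its $2$-adic valuation is indeed $n-k$, but the odd cofactor is not $1$.

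The second gap is the one you flag yourself: the indecomposability of $\tr U(Y)$ over $F$. With the correction above one has $U(Y)=M(Y)$, hence $\tr U(Y)=M(X_k)$, and your claim $U(X_k)=\tr U(Y)$ becomes the assertion that $M(X_k)$ itself is indecomposable. For $k=0$ this is \cite[Theorem 1.2]{qweil}, but for $k>0$ nothing in your outline excludes the further summands that \cite{outer} and \cite{qweil} permit in $M(X_k)$, namely shifts of $U(X_l)$ with $l<k$ and of $\cores_{K/F}U(\SB(2^l,D))$ with $l\le k$. So even after fixing the rank formula, the argument does not isolate $U(X_k)$.

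The paper proceeds differently and sidesteps both issues: it argues by induction on $k$, computes $\rk_F M(X_k)=\binom{2^n}{2^k}$ and $\rk_K M(X_k)$ directly via Lemma \ref{primer}, and then uses the classification from \cite{outer} and \cite{qweil} to check that every summand of $M(X_k)$ other than $U(X_k)$ has strictly larger $2$-adic valuation in both ranks (the $U(X_l)$ by the induction hypothesis, the corestriction summands because $\rk_F=0$ and $\rk_K=\binom{2^n}{2^l}$). This determines $v_2(\rk_F U(X_k))$ and $v_2(\rk_K U(X_k))$ without ever identifying $U(X_k)$ as a Weil transfer.
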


\begin{proof}
The proof proceeds by induction on $k$.
Let us do the induction base $k=0$.
According to \cite[Theorem 1.2]{qweil}, $U(X_0)=M(X_0)$.
Since $\rk M(\SB(1,D))=2^n$,
it follows from Lemma \ref{primer} that
$\rk_F U(X_0)=2^n$, $\rk_K U(X_0)=2^{n-1}(2^n-1)$.

Now we assume that $k>0$.
Since
$\rk M(\SB(2^k,D))=b:=\binom{2^n}{2^k}$,
it follows from Lemma \ref{primer} that
$\rk_FM(X_k)=b$ and $\rk_KM(X_k)=b(b-1)/2$.
In particular, $v_2(\rk_FM(X_k))=n-k>0$ and $v_2(\rk_KM(X_k))=n-k-1$.
Therefore, it suffices to show that for each summand $M$ different from $U(X_k)$
in the complete motivic decomposition of $X_k$, we have $v_2(\rk_FM)>n-k$
and $v_2(\rk_KM)>n-k-1$.

By \cite{outer} and \cite{qweil}, $M$ is a shift of the motive $U(X_l)$ with some
$l\in[0,\;k-1]$ or a shift of the motive $\cores_{K/F}U(\SB(2^l,D))$ with
some $l\in[0,\;k]$.
In the first case we are done by the induction hypothesis.
In the second case we have $\rk_FM=0$ and $\rk_KM=\binom{2^n}{2^l}$.
\end{proof}

\section
{Unitary isotropy theorem}
\label{Unitary isotropy theorem}

Let $K$ be a field, $A$ a central simple $K$-algebra,
$\tau$ a unitary involution on $A$,
$F$ the subfield of the elements of $K$ fixed under $\tau$.
We say that $\tau$ is {\em isotropic}, if $\tau(I)\cdot I=0$ for some
non-zero right ideal $I\subset A$;
otherwise we say that $\tau$ is {\em anisotropic}.

\begin{thm}[{\bf Unitary Isotropy Theorem}]
\label{main}
Assume that $\Char F\ne2$.
If $\tau$ becomes isotropic over any field extension $F'/F$  such that $K':=K\otimes_FF'$ is a field
and the central simple $K'$-algebra $A':=A\otimes_FF'$ is split,
then $\tau$ becomes isotropic over some finite odd degree field extension of $F$.
\end{thm}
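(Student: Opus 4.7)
The plan is a proof by contradiction following the strategy sketched in the introduction. Assume $\tau$ is anisotropic over $F$ and over every finite odd-degree extension of $F$. A standard index-reduction argument (passing to an odd-degree extension that kills the odd part of the index of $A$) reduces to the case where $A$ has $2$-primary index $2^r$ with $r \geq 1$. Fix a central division $K$-algebra $D$ Brauer-equivalent to $A$, let $Y$ be the $F$-variety of isotropic right ideals in $A$ of reduced dimension $2^r$, and let $X$ be the Weil transfer to $F$ of the generalized Severi--Brauer variety of right ideals in $D$ of reduced dimension $2^{r-1}$. Under this hypothesis $Y$ has no closed point of odd degree, since such a point would provide isotropy of $\tau$ over an odd-degree extension.

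Working with Chow motives over $F$ with coefficients in $\F_2$, the first main step is to exhibit the upper motive $U(X)$ of $X$ as a direct summand of $M(Y)$, producing a projector $\pi \in \Ch_{\dim Y}(Y \times Y)$ with $(Y,\pi) \simeq U(X)$. Then, combining $\pi$ with $\pi^t$ and invoking the Krull--Schmidt principle with $\F_2$-coefficients, replace $\pi$ by a \emph{symmetric} projector whose underlying summand of $M(Y)$ still realizes the upper summand $U(X)$ (with controlled rank). Because $Y$ has no odd-degree closed points, Lemma \ref{symmop} applies and gives $\sq(\pi) = \st(\pi)$ in $\Z/4\Z$. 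The contradiction will come from evaluating the two sides separately and showing that they disagree modulo $2$.

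For $\sq(\pi)$, Lemma \ref{1.5} identifies the value with $\rk(\pi) \bmod 4 = \rk U(X) \bmod 4$; applying Proposition \ref{ranks} with $n = r$ and $k = r-1$ gives $v_2(\rk_F U(X)) = 1$ and $v_2(\rk_K U(X)) = 0$, so $\rk U(X) = \rk_F U(X) + 2\,\rk_K U(X) \equiv 2 + 2 \equiv 0 \pmod 4$, whence $\sq(\pi) = 0$. For $\st(\pi)$, Lemma \ref{new st}(1) identifies $\st(\pi) \bmod 2$ with $\deg \Steen^\bullet(\pi)$; to compute this one extends scalars to $K$, transports $\pi_K$ through the identification $\Ch(Y_k) = \Ch(H_k)^\sigma$ of Example \ref{link}, and then uses the closed embedding $\inc\colon H \hookrightarrow X_{2k}$ of Section \ref{Steenrod operations for split unitary grassmannians} together with Proposition \ref{1.9} and Lemma \ref{in*} to reduce the problem to a Steenrod computation on a split orthogonal grassmannian, controlled by the level-counting apparatus behind Theorem \ref{main Steen} (using Lemma \ref{[Y]} and Corollary \ref{2.4}).

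The main obstacle is the non-vanishing $\st(\pi) \not\equiv 0 \pmod 2$, highlighted in the introduction as the essential difference from the orthogonal case: one must \emph{exhibit} a surviving Steenrod monomial attached to $\pi$, not merely prove a vanishing. Concretely, the target is to show that $\inc_*\inc^*$ (or the appropriate correspondence composition) carries a class derived from $\pi_K$ to a standard monomial $[H]\cdot y$ in $\Ch(X_{2k})$ with non-trivial Steenrod degree, mirroring the incoming side of the proof of Theorem \ref{main Steen}. Arranging that the specific symmetric projector produced in the second paragraph has this property is the delicate technical point of the whole argument; once it is settled, $\st(\pi) \equiv 1 \pmod 2$ contradicts $\sq(\pi) = 0$, forcing $\tau$ to be isotropic over some finite odd-degree extension of $F$.
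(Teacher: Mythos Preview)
Your computation of $\sq(\pi)=0$ is correct, but the plan for $\st(\pi)$ cannot work as stated. You aim for $\st(\pi)\equiv 1\pmod 2$, yet Lemma~\ref{symmop} already asserts \emph{unconditionally} that $\sq(\pi)\equiv\st(\pi)\pmod 2$ for every symmetric projector; since $\sq(\pi)=0$, necessarily $\st(\pi)\in\{0,2\}\subset\Z/4\Z$. The contradiction one is after is $\st(\pi)=2$, a genuinely mod-$4$ statement, and it cannot be extracted from $\deg\Steen^\bullet(\pi)\in\F_2$ alone.

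The paper obtains $\st(\pi)=2$ by a decomposition that you have omitted. Over a field $\tilde F$ where $A$ is split with hyperbolic involution but $\tilde K=K\otimes_F\tilde F$ is still a field, one writes $\pi_{\tilde F}=\alpha+\beta$ as a sum of orthogonal \emph{symmetric} projectors, with $(Y_{\tilde F},\alpha)$ a sum of Tate motives and $(Y_{\tilde F},\beta)$ a sum of shifts of $M(\Spec\tilde K)$; the additivity of $\sq'$ and $\st$ on orthogonal symmetric correspondences (Lemmas~\ref{new1.2} and \ref{new st}) then reduces the problem to each piece. One gets $\sq(\alpha)=\rk_F U(X)\bmod 4=2$ and $\sq(\beta)=2\rk_K U(X)\bmod 4=2$ from Proposition~\ref{ranks}. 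For $\st$, Theorem~\ref{main Steen} is a \emph{vanishing} result, not a non-vanishing one: it forces $\pr_{2*}\Steen^\bullet$ of each summand of $\alpha$ to be even integrally, whence $\st(\alpha)=0$. The non-triviality is carried by $\beta$: since $\beta_{\tilde K}$ lies in the image of $1+\sigma$, it is ``universally rational'' and may be transported to an \emph{anisotropic} unitary grassmannian $Y'$, where the second clause of Lemma~\ref{symmop} gives $\st(\beta)=\sq(\beta)=2$. Summing, $\st(\pi)=0+2=2\ne 0=\sq(\pi)$.

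In short, the Steenrod machinery of Section~\ref{Steenrod operations for split unitary grassmannians} is used only to kill $\st(\alpha)$; the non-zero contribution to $\st(\pi)$ is smuggled in via $\st(\beta)=\sq(\beta)$. The splitting $\pi=\alpha+\beta$ over a suitable $\tilde F$, together with the ``always rational'' argument for $\beta$, is the key idea missing from your outline.
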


\begin{proof}

We can easily reduce this theorem to the case of $2$-primary $\ind A$.
Indeed, it suffices to find a finite odd degree field extension $L/F$,
such that $A$ becomes $2$-primary over $L$.
For such $L/F$ we can take the field extension of $F$ corresponding to a Sylow $2$-subgroup
of the Galois groups of the normal closure of $E/F$, where $E$ is a separable
finite odd degree field extension of $K$ such that $\ind (A\otimes_F E)$ is $2$-primary.

Because of the above reduction, we assume that the index of $A$ is a power of $2$.

We follow the lines of the proof of \cite[Theorem 1]{oddisotro-tignol}.
We prove Theorem \ref{main} over all fields simultaneously using an induction on $\ind A$.
The case of $\ind A =1$ is trivial.
From now we are assuming that $\ind A =2^r$ for some integer $r\geq 1$, and we fix the following notations:

$F$ is a field of characteristic different from $2$;

$K/F$ is a quadratic field extension;

$A$ is a central simple $K$-algebra of the index $2^r$ (with $r\geq1$);

$\tau$ is an $F$-linear unitary involution on $A$;

$D$ is a central division $F$-algebra (of degree $2^r$) Brauer-equivalent to $A$;

$V$ is a right $D$-module of $D$-dimension $v$ with an isomorphism $\End_D (V) \simeq A$
(in particular, $\rdim V = \deg A = 2^r \cdot v$, where $\rdim V:=\dim_FV/\deg D$ is the reduced dimension);

we fix an arbitrary $F$-linear unitary involution $\varepsilon$ on $D$;

$h$ is a hermitian (with respect to $\varepsilon$) form on $V$ such that the involution $\tau$ is adjoint to $h$;

$Y = X(2^r;(V,h)) \simeq X(2^r;(A,\tau))$ is the variety of totally isotropic submodules in $V$ of reduced
dimension $\rdim = 2^r$ which is isomorphic (via Morita equivalence)
to the variety of right totally isotropic ideals in $A$ of the same reduced dimension;

$X$ is the Weil transfer (with respect to $K/F$) of the generalized Severi-Brauer $K$-variety $X(2^{r-1};D)$.

We are going to apply the assumption of Theorem \ref{main} to only one
field extension $F'/F$, namely, to
the function field of the Weil transfer
of the Severi-Brauer variety $X(1;D)$ of $D$.
So, starting from this point, $F'$ stands for this function field.
Clearly, $K':=K\otimes_FF'$ is a field and $A'$ is split.
We assume that the involution $\tau'$ (and therefore, the hermitian form $h_{F'}$)
is isotropic  and we want to show that $h$ (and $\tau$) becomes isotropic over a finite
odd degree extension of $F$.
According to \cite[Theorem 1.4]{unitary}, the Witt index of $h_{F'}$ is a multiple of $2^r = \ind A$.
In particular, $v\geq2$.
If the Witt index is greater than $2^r$, we replace $V$ by a submodule in $V$ of $D$-codimension $1$
and we replace $h$ by its restriction on this new $V$.
The Witt index of $h_{F'}$ drops by $2^r$ or stays unchanged.
We repeat the procedure until the Witt index becomes equal to $2^r$.
In particular, $v$ is still $\geq2$.

The variety $Y$ has an $F'$-point and the index of the central simple
$K\otimes_FF(X)$-algebra $A\otimes_FF(X)$ is equal to $2^{r-1}$
(note that $K\otimes_FF(X)$ is a field).
Consequently, by the induction hypothesis, the variety $Y_{F(X)}$ has an odd degree closed point.
We prove Theorem \ref{main} by showing that the variety $Y$ has an odd degree closed point.

We will use and we recall the following statement from \cite{oddisotro-tignol}.

\begin{prop}
\label{summand}
Let $\mathfrak{X}$ be a geometrically split, geometrically irreducible $F$-variety satisfying the
nilpotence principle and let $\mathcal{Y}$ be a smooth complete $F$-variety.
Assume that there exists a field extension $E/F$ such that
\begin{enumerate}
  \item for some field extension $\overline{E(\mathfrak{X})}/E(\mathfrak{X})$, the image of the change
  of field homomorphism $\Ch(\mathcal{Y}_{E(\mathfrak{X})}) \rightarrow \Ch(\mathcal{Y}_{\overline{E(\mathfrak{X})}})$
  coincides with the image of the change of field homomorphism
  $\Ch(\mathcal{Y}_{F(\mathfrak{X})}) \rightarrow \Ch(\mathcal{Y}_{\overline{E(\mathfrak{X})}})$;
  \item the $E$-variety $\mathfrak{X}_E$ is $p$-incompressible;
  \item a shift of the upper indecomposable summand of $M(\mathfrak{X})_E$ is a summand of $M(\mathcal{Y})_{E}$.
\end{enumerate}

Then the same shift of the upper indecomposable summand of $M(\mathfrak{X})$ is a summand of $M(\mathcal{Y})$.
\end{prop}

We are going to apply Proposition \ref{summand} (with $p=2$) $\mathfrak{X} =X$, $\mathcal{Y} =Y$,
and $E=F(Y)$.
We need to check that conditions (1) - (3) are satisfied for these $X,Y,E$.
First of all, we need a motivic  decomposition of $Y$ over a field extension $\tilde{F}/F$,
such that $Y(\tilde{F}) \neq \emptyset$ and $\tilde{K}=K\otimes_F\tilde{F}$ is a field.
Over such $\tilde{F}$, the hermitian form $h$ decomposes in the orthogonal sum of the hyperbolic
$\tilde{D}$-plane and a hermitian form $h'$ on a right $\tilde{D}$-module $V'$ with $\rdim V'=2^r(v-2)$,
where $\tilde{D}$  is central simple $\tilde{K}$-algebra $D\otimes_F\tilde{F}$.
Let $L/F(X)$ be a finite odd degree extension such that $Y(L)\neq \emptyset$.
Recall that a smooth projective variety is {\em anisotropic}, if it has no odd degree
closed points (by \cite[lemma 6.3]{hypernew-tignol}, the motive of an anisotropic variety does not contain a Tate
summand).

\begin{lemma}
\label{mdec}
The shift of the motive of $X_{\tilde{F}}$ and two Tate motives are the motivic summands of $Y_{\tilde{F}}$.
In the case $\tilde{F} = L$, any other motivic summand of $Y_L$ is a shift of some anisotropic $L$-variety.
\end{lemma}

\begin{proof}
According to \cite[Theorem 15.8]{MR1758562}, the variety $Y_{\tilde{F}}$ is a relative cellular space
(as defined in \cite[\S66]{EKM}) over the (non-connected) variety $Z$ of triples $(I,J,N)$,
where $I$ and $J$ are right ideals in $D$, and where $N$ is a submodule in $V'$ such that the
submodule $I \oplus J \oplus N \subset V$ is a point of $Y_{\tilde{F}}$
(that is, $\varepsilon_{\tilde{F}}(I) \cdot J = 0$, $N$ is totally isotropic, and the reduced
dimension of the $\tilde{D}$-module $I \oplus J \oplus N \subset V$ is equal to $\deg \tilde{D}$).
Therefore, by \cite[Corollary 66.4]{EKM},
the motive of $Y_{\tilde{F}}$ is the sum of shifts of the motives of the components of $Z$.

The shift of the motive of $X_{\tilde{F}}$ is given by the motive of the component of the triples
$\{(I,J,0) | \rdim I = \rdim J = (\deg \tilde{D})/2 \}$.
The rational points $(0,\tilde{D},0)$ and $(\tilde{D},0,0)$ of $Z$ are components of $Z$
which produce the two promised Tate summands.
In the case $\tilde{F} = L$ we have $\ind \tilde{D} = (\deg \tilde{D})/2 = 2^{r-1}$.
Therefore to prove the second statement of this lemma, we only need to check that the component of
$Z$ of triples $(0, 0, N)$ is anisotropic.
It is true, because this component is naturally identified with anisotropic $L$-variety $Y' = X(2^r; (V',h'))$.
\end{proof}

\begin{rem}
\label{rem}
Two Tate motives mentioned in Lemma \ref{mdec} are clearly $\F_2$ and $\F_2(\dim Y)$.
In the case $\tilde{F} = L$, by duality, the motivic summand $M(X_L)$ of $Y_L$ has as the shifting number the integer
$$d:= (\dim Y - \dim X)/2 \, .$$
\end{rem}

Since $Y(F(Y))\neq \emptyset$, the condition (3) of Proposition \ref{summand} is checked by Lemma \ref{mdec}.
Let us check now the condition (2).
By \cite[Theorem 1.1]{qweil}, the variety $X_{F(Y)}$ is $2$-incompressible if (and only if)
the $K\otimes_FF(Y)$-algebra $D\otimes_FF(Y)$ is division.
This is indeed the case:

\begin{lemma}
\label{div}
The algebra $D\otimes_FF(Y)$ is division, that is, $\ind (D\otimes_FF(Y)) = \ind D$.
\end{lemma}

\begin{proof}
The proof is similar to the proof of \cite[Lemma 6]{oddisotro-tignol}. Assume that $\ind (D\otimes_FF(Y)) < \ind D$.
Then we could prove as in \cite[Lemma 6]{oddisotro-tignol}, that the upper indecomposable motivic summand of $X$
is a motivic summand of $Y$.
This implies (because the variety $X$ is $2$-incompressible) that the complete motivic decomposition of the
variety $Y_{F(X)}$ contains the Tate summand $\F_2(\dim X)$.
By Lemma \ref{mdec} and Remark \ref{rem} we get a contradiction.
\end{proof}

We have checked condition (2) of Proposition \ref{summand}.
To check the remaining condition (1), we will need the same property for the variety $Y$ as in
\cite[Lemma 7]{oddisotro-tignol}.
We can prove it for more general class of varieties.
Let $\mathcal{Z}$ be a projective homogeneous variety under an arbitrary absolutely simple
algebraic group $G$ of type
$A_n$ over a field $k$
(we can replace ``absolutely simple of type $A_n$'' by the condition, that $G$ is semisimple and becomes of inner type over
some quadratic separable field extension of $k$).
In other words, $\mathcal{Z}$ is a variety of flags of isotropic right ideals of a central
simple algebra over a quadratic separable field extension of $k$ endowed (the algebra)
with a unitary $k$-linear involution.

\begin{lemma}
\label{Gal}
Let $k'/k$ be a finite odd degree field extension and let
$\bar{k}$ be an algebraic closure of $k$ containing $k'$.
Then $\Im(\Ch({\mathcal{Z}})\to\Ch({{\mathcal{Z}}_{\bar{k}}}))=\Im(\Ch({\mathcal{Z}}_{k'})\to\Ch({{\mathcal{Z}}_{\bar{k}}}))$.
\end{lemma}

\begin{proof}
For any field extension $E\subset \bar{k} $ of $k$, we write $I_E$
for the image of $\Ch({\mathcal{Z}}_E)\to\Ch({{\mathcal{Z}}_{\bar{k}}})$.
We only need to show that $I_{k'}\subset I_k$ because,
clearly, $I_k\subset I_{k'}$.

If $G$ is of inner type, the variety ${\mathcal{Z}}$ is a variety of flags of right ideals of a central simple
$k$-algebra.
Therefore the group $\Aut(\bar{k}/k)$ acts trivially
on $\Ch({{\mathcal{Z}}_{\bar{k}}})$.
It follows that $[k':k]\cdot I_{k'}\subset I_k$ and therefore $I_{k'}\subset
I_k$.

Now we assume that $G$ is of outer type.
Let $K\subset \bar{k}$ be the separable quadratic field extension
of $k$ such that $G_K$ is of inner type. Consider two subgroups
$\Aut(\bar{k}/K)$ and $\Aut(\bar{k}/k')$ of the group $\Aut(\bar{k}/k)$. Acting on $\Ch({{\mathcal{Z}}_{\bar{k}}})$, they act trivially on $I_{k'}$.
The index of the first subgroup is $2$ while the index of the second one
is odd (a divisor of $[k':k]$).
Indeed,
$$
\Aut(\bar{k}/k)=\Aut(k_\mathrm{sep}/k),\;\; \Aut(\bar{k}/K)=\Aut(k_\mathrm{sep}/K),
$$
where $k_\mathrm{sep}$ is the separable closure of $k$ in
$\bar{k}$,
so that $\Aut(\bar{k}/k)/\Aut(\bar{k}/K)=\Aut(K/k)$;
if $k''$ is the separable closure of $k$ in $k'$, then $\Aut(\bar{k}/k')=\Aut(\bar{k}/k'')$, so that
the index of $\Aut(\bar{k}/k')$ in $\Aut(\bar{k}/k)$ is $[k'':k]$.

It follows that $\Aut(\bar{k}/k)$ acts trivially on $I_{k'}$.
Therefore we still have the inclusion $[k':k]\cdot I_{k'}\subset I_k$ giving $I_{k'}\subset I_k$.
\end{proof}

\begin{cor}
\label{cor}
$U(X)(d)$ is a motivic summand of $Y$.
\end{cor}
\begin{proof}
As planned, we apply Proposition \ref{summand} to $p=2$, $\mathfrak{X} =X$, $\mathcal{Y} =Y$, and $E=F(Y)$.
Since $E(X)\subset L(Y)$, we have the commutative diagram
$$
\begin{CD}
\CH(Y_{E(X)}) @>>> \CH(Y_{L(Y)}) @>>> \CH(Y_{\overline{L(Y)}})\\
@AAA  @AAA \\
\CH(Y_{F(X)}) @>>> \CH(Y_L)
\end{CD}
$$
where the maps are the change of field homomorphisms and where $\overline{L(Y)}$ is an algebraic closure of $L(Y)$.
We check condition (1) for $\overline{E(\mathfrak{X})} = \overline{L(Y)}$. For any field extension $\mathcal{F}\subset \overline{L(Y)} $ of $F$, we write $I_{\mathcal{F}}$
for the image of $\Ch({\mathcal{Y}}_{\mathcal{F}})\to \Ch({\mathcal{Y}}_{\overline{L(Y)}})$. We only need to show that $I_{E(X)}\subset I_{F(X)}$. We have $I_{E(X)}\subset I_{L(Y)}$. Since $Y(L) \neq \emptyset$, the field extension $L(Y)/L$ is purely transcendental. Therefore ${\res}_{L(Y)/L}$ is surjective and $I_{L(Y)} = I_L$. Finally, by Lemma \ref{Gal}, $I_L = I_{F(X)}$. We obtain the necessary inclusion $I_{E(X)}\subset I_{L(Y)} = I_L = I_{F(X)}$.

As already pointed out, condition (2) is satisfied by Lemma \ref{div}, and condition (3) is satisfied by Lemma \ref{mdec}. Therefore, by Proposition \ref{summand}, a shift of $U(X)$ is a motivic summand of $Y$.
By Remark \ref{rem}, it follows that the shifting number of this motivic summand $U(X)$ is equal to $d$.
\end{proof}

As in \cite{oddisotro-tignol} we need the following enhancement of Corollary \ref{cor}.

\begin{prop}
There exists a \underline{symmetric} projector $\pi$ on $Y$
such that the motive $(Y,\pi)$ is isomorphic to $U(X)(d)$.
\end{prop}

\begin{proof}
We can follow the lines of the proof of \cite[Proposition 9]{oddisotro-tignol}
if we know that the complete motivic decomposition of $Y_{F(X)}$ could not contain two copies of $\F_2(d)$.
This is true by Lemma \ref{mdec} and Remark \ref{rem}.
\end{proof}

The following proposition finishes the proof of Theorem \ref{main}.

\begin{prop}
Let $F$ be a field of characteristic $\ne2$.
Let $K/F$ be a
quadratic field extension.
Let $D$ be a central division $K$ algebra of degree $2^r$ with some
$r\geq1$ admitting a $K/F$-unitary involution.
Let $X$ be the Weil transfer of the generalized Severi-Brauer variety
$\SB(2^{r-1},D)$.
Let $A$ be a central simple $K$-algebra Brauer-equivalent to $D$ endowed with a
$K/F$-unitary involution.
Let $Y$ be the variety of isotropic rank $2^r$ right ideals in $A$.
Assume that there is a symmetric projector $\pi\in\Ch_{\dim Y}(Y\times Y)$
such that the motive $(Y,\pi)$ is isomorphic to $U(X)(d)$.
Then $Y$ has a closed point of odd degree.
\end{prop}

\begin{proof}
By Lemma \ref{symmop}, it is enough to show that $\sq(\pi)\ne\st(\pi)$.
Computing $\sq$ and $\st$, we may go over any field extension of $F$.
There exists a field extension $\tilde{F}/F$
over which $A$ is split
and the unitary involution on $A$ is hyperbolic,
but $\tilde{K}:=K\otimes_F\tilde{F}$ is still a field.
The variety $Y_{\tilde{F}}$ can be identified with the variety of
$2^r$-dimensional totally isotropic subspaces of some vector space
$V/\tilde{F}$ endowed with a hyperbolic $\tilde{K}/\tilde{F}$-hermitian form $h$.

Since the motive of $X$ over $\tilde{F}$ is a sum of shifts of the motives
of $\Spec \tilde{F}$ and $\Spec \tilde{K}$, $\pi$ decomposes in a sum of
two orthogonal projectors $\alpha$ and $\beta$ such that $(Y_{\tilde{F}},\alpha)$ is a
sum of shifts of the motive of $\Spec\tilde{F}$ and $(Y_{\tilde{F}},\beta)$
is a sum of shifts of the motive of $\Spec\tilde{K}$.

First of all we will show that the projectors $\alpha$ and $\beta$ can be
chosen to be symmetric.
Let us consider the $\F_2$-vector space $\Ch(Y_{\tilde{K}})$ together with
the non-degenerate symmetric bilinear form $b\colon(v,u)\mapsto\deg(v\cdot
u)\in\F_2$.
Since $\pi$ is symmetric, the image $V:=\Ch_*(Y_{\tilde{K}},\pi_{\tilde{K}})$
of the projector
$\pi_*\colon\Ch(Y_{\tilde{K}})\to\Ch(Y_{\tilde{K}})$ is
orthogonal to its kernel.
In particular, the subspace $V$ is non-degenerate (with respect to $b$).
Since the projectors
$\alpha_*,\alpha^*=(\alpha^t)_*\colon V\to V$ are adjoint,
we have $(\Im\alpha^*)^\bot=\Ker\alpha_*$.
The subspace $\Im\alpha_*\subset V$ is non-degenerate:
since $(1+\sigma)V\subset\rad V^\sigma$ (because $b$ is $\sigma$-invariant) and
$$
\Im\alpha_*\oplus(1+\sigma)V=V^\sigma=\Im\alpha^*\oplus(1+\sigma)V,
$$
we have that
$
\rad\Im\alpha_*\subset
(\Im\alpha_*)\cap(\Im\alpha^*)^\bot=
(\Im\alpha_*)\cap(\Ker\alpha_*)=0.
$
It follows that $V=\Im\alpha_*\oplus(\Im\alpha_*)^\bot$.
Since the subspace $(\Im\alpha_*)^\bot$ is homogeneous and $\sigma$-invariant, the
orthogonal projections of $V$ onto the summands of this orthogonal
decomposition are realized by some (uniquely determined) projectors
$$
\alpha',\beta'\in\End(Y_{\tilde{F}},\pi_{\tilde{F}})\subset\Ch_{\dim Y}(Y\times Y)_{\tilde{F}}.
$$
The projectors $\alpha'$ and $\beta'$ are orthogonal, symmetric, and satisfy $\alpha'+\beta'=\pi$.
Since the motive $(Y_{\tilde{F}},\alpha')$ is split of the same rank as
$(Y_{\tilde{F}},\alpha)$,
the motive $(Y_{\tilde{F}},\beta')$ is a sum of shifts of the motive of $\Spec
K$ by the Krull-Schmidt principle.

Replacing $\alpha$ by $\alpha'$ and $\beta$ by $\beta'$,
we have (see Lemmas \ref{new1.2} and \ref{1.5}):
$\sq(\alpha)=\rk_F(Y,\pi)\pmod{4}=\rk_FU(X)\pmod{4}=2$ and
$\sq(\beta)=2\rk_K(Y,\pi)\pmod{4}=2\rk_KU(X)\pmod{4}=2$
by Proposition \ref{ranks}.
On the other hand, $\st(\alpha)=0$.
Indeed, $\alpha$ over $\tilde{F}$ is a sum of $a\times b$ with $a,b\in\Ch_{\geq
d}(Y_{\tilde{F}})$, where
$d=(\dim Y-\dim X)/2=\big(k(2n-3k)-k^2/2\big)/2$ with $k:=2^r=\ind A$ and $n:=\deg A$.
Since $d>k(n-2k)$, $\deg\Steen(a)=0$ by Theorem \ref{main Steen}.
Therefore $\pr_{2*}\Steen(a\times b)=0$.
It follows that $\pr_{2*}(\mf{a})$ is divisible by $2$ for an integral representative $\mf{a}$
of $\Steen(a\times b)$.
Therefore $\pr_{2*}(\mf{a})$ is divisible by $2$ if now $\mf{a}$ is an integral
representative of $\Steen(\alpha)$.
It follows that $\pr_{2*}(\mf{a})^2$ is divisible by $4$ and consequently
$\st(\alpha)=0$.

Finally, let us check that $\st(\beta)=\sq(\beta)$.
The point is that $\beta_{\tilde{K}}$ is in
$$
(1+\sigma)\Ch(Y\times Y)_{\tilde{K}}=
\Im\big(\Ch(Y\times Y)_{\tilde{K}}\to
\Ch(Y\times Y)_{\tilde{F}}\to\Ch(Y\times Y)_{\tilde{K}}\big).
$$
Therefore the element $\beta_{\tilde{K}}\in\Ch(Y\times Y)_{\tilde{K}}$
is a sort of ``always rational'' element:
this is an element of the Chow group of the square of the {\em completely split} unitary grassmannian such that
for {\em any} hermitian form $h'$ (hyperbolic or not and over any field $F'$) of the same dimension and the corresponding unitary grassmannian
$Y'$, this element considered in $\beta\in\Ch(Y'\times Y')_{\bar{F'}}=\Ch(Y\times Y)_{\tilde{K}}$ is rational.
Taking {\em anisotropic} $h'$
(in which case the variety $Y'$ has no odd degree closed points), we get
by Lemma \ref{symmop} that $\st(\beta)=\sq(\beta)$.

We have
calculated the values of the operations $\sq$ and $\st$ on $\alpha$ and
$\beta$.
We have by Lemmas \ref{new1.2} and \ref{new st}
that $\sq(\pi)=\sq(\alpha)+\sq(\beta)=0$ and
$\st(\pi)=\st(\alpha)+\st(\beta)=2$.
In particular, $\sq(\pi)\ne\st(\pi)$.
\end{proof}

Theorem \ref{main} is proved.
\end{proof}


\def\cprime{$'$}

\end{document}